\newtheorem{theorem}{Theorem}[section]
\newtheorem{lemma}[theorem]{Lemma}
\newtheorem{corollary}[theorem]{Corollary}
\newtheorem{proposition}[theorem]{Proposition}
\theoremstyle{definition}
\theoremstyle{remark}
\newtheorem{remark}[theorem]{Remark}
\newtheorem{example}[theorem]{Example}
\newtheorem*{acknowledgements}{Acknowledgments}
\newcommand{\norm}[1]{\lVert#1\rVert}
\newcommand{\ran}{\mathrm{ran}\,}
\newcommand{\dom}{\mathrm{dom}\,}
\newcommand{\codim}{\mathrm{codim}\,}
\newcommand{\card}{\mathrm{card}\,}
\newcommand{\mul}{\mathrm{mul}\,}
\newcommand{\spc}{\mathrm{Sp}\,}
\begin{document}

\begin{frontmatter}



\title{Factorizations of Linear Relations}


\author[popovici]{Dan Popovici\corref{cor1}}
\ead{popovici@math.uvt.ro}
\cortext[cor1]{Corresponding author.}
\address[popovici]{Department of Mathematics, University of the West Timi\c soara, Bd. Vasile P\^arvan no.4, 300223, Timi\c soara, Romania}

\author[sebestyen]{Zolt\'an Sebesty\'en}
\ead{sebesty@cs.elte.hu}
\address[sebestyen]{Department of Applied Analysis and Computational Mathematics, E\"otv\"os Lor\'and University of Budapest, P\'azm\'any s\'et\'any 1/C, Budapest, H-1117, Hungary}

\begin{abstract}
Given two linear relations $A$ and $B$ we characterize the existence of a linear relation (operator) $C$ such that $A\subseteq BC$, respectively $A\subseteq CB.$ These factorizations extend and improve well-known results by R.G. Douglas and Z. Sebesty\'en.
\end{abstract}

\begin{keyword}
Linear relation; unbounded operator; Douglas factorization theorem

\end{keyword}

\end{frontmatter}


\section{Introduction}
\label{S1}

R.G. Douglas proposed in \cite{Do} two conditions on a given pair (A, B) of bounded linear operators acting on a Hilbert space $\mathfrak{H}$ which are equivalent to the existence of a bounded linear operator $C$ on $\mathfrak{H}$ such that the factorization $A=BC$ holds true. The first one, which was actually indicated by P. Halmos, relates the ranges of $A$ and $B$ (more exactly, $\ran A\subseteq\ran B$), while the second one is a majorization result between the positive operators $AA^*$ and $BB^*$ (more exactly, $AA^*\le \lambda BB^*$ for some $\lambda\ge 0$). The solution $C$ to the operator equation $A=BC$ is uniquely determined if we require, in addition, that $\ran C\subseteq \overline{\ran B^*}$. This particular solution, called the reduced (or Douglas) solution satisfies, in addition, the conditions $\ker C=\ker A$ and $\norm C^2=\inf\{\mu\ge 0 : AA^*\le\mu BB^*\}.$ One common approach into the study of the Douglas equation $AX=B$ involves the theory of generalized inverses. In fact, as it was shown in \cite{EN} (cf. also \cite{ACG1}), the reduced solution $C$ of the above equation can be computed explicitly in terms of the Moore-Penrose inverse $B^\dagger$ of $B$, more precisely as $C=B^\dagger A.$ Other facts and applications relating the Douglas theorem and the theory of generalized inverses can be found in \cite{Na}. This theorem has been recently used as an important tool into the study of $A$-operators (with $A$ positive) \cite{ACG2, ACG3}, the invertibility of operator matrices \cite{HC}, or the theory of commuting operator tuples associated with the unit ball in $\mathbb{C}^d$ \cite{RS}. Generalized versions have been proposed by M.R. Embry in \cite{Em} (for operators on Banach spaces) and by X. Fang, J. Yu and H. Yao in \cite{FYY} (for adjointable operators on Hilbert $C^*$-modules).

As a potential tool in the theory of linear partial differential equations Douglas formulated the theorem above for the case when the operators $A$ and $B$ are only closed and densely defined. He showed that the range inclusion $\ran A\subseteq \ran B$ is sufficient for a factorization of the form $A\subseteq BC$, where $C$ is a certain densely defined operator. Z. Sebesty\'en \cite{Se} improved the results of Douglas and factorized a densely defined operator $A$ as $A\subseteq BC,$ where $B$ is the adjoint of a densely defined operator, $\ran A\subseteq\ran B$ and $C$ is minimal in the sense that
\begin{equation*}
\norm {Cx} \le \norm y \mbox{ for } x\in\dom A\mbox{ and }y\in\dom B\mbox{ such that }Ax=By.
\end{equation*}

The concept of linear relation between linear spaces has been introduced by R. Arens \cite{Ar} in order to extend results in operator theory from singlevalued to multivalued case. This notion had theoretical implications in various domains and it was used in several applications \cite{Cr}. These facts motivated us to study the Douglas theorem for the generalized framework of linear relations. To be more precise, our main goal in this paper is that, for two given linear relations $A$ and $B$ to characterize the existence of a linear relation $C$ such that $A\subseteq BC$, respectively $A\subseteq CB.$ The case when $C$ is required to be  (the graph of) an operator is also described. 

The paper is organized as follows. In Section 2 we prove that the range inclusion $\ran A\subseteq\ran B$ is necessary and sufficient for the existence of a linear relation $C$ for which $A\subseteq BC.$ A more precise solution is $C=B^{-1}A.$ As a consequence, the problem concerning the existence of a linear relation $C$ such that $A\subseteq CB$ is also solved. Section 3 contains information regarding the existence of an operator $C$ such that the factorization $A\subseteq BC$ holds true. More exactly, we show that the problem $A\subseteq BX$ has an operator solution if and only if
\begin{equation*}
\ran A\subseteq\ran B\quad\text{ and }\quad\mul A\subseteq\mul B.
\end{equation*}
We extend, in particular, the theorems of R. G. Douglas \cite{Do} and Z. Sebesty\'en \cite{Se} mentioned above. The problem $A\subseteq XB$ with operator solutions $X=C$ is considered in the last two parts. Firstly, we associate an operator $C$ to any Hamel basis $\{ z_\alpha\}_{\alpha\in I}$  for the range of $A$, any Hamel basis $\{ x'_\beta\}_{\beta\in J}$ for the kernel of $A$, any linearly independent family $\{ y_\alpha\}_{\alpha\in I}$ with $y_\alpha\in BA^{-1}(z_\alpha),\ \alpha\in I$ and any family $\{ y'_\beta\}_{\beta\in J}$ with $y'_\beta\in B(x'_\beta),\ \beta\in J$ such that the subspaces generated by $\{y_\alpha\}_{\alpha\in I}$ and, respectively, by $\{y'_\beta\}_{\beta\in J}$ have null intersection. We identify, in this context, the general form of all solutions. Secondly, we prove that the proposed problem has an operator solution if and only if 
\begin{equation*}
\dom A\subseteq\dom B\quad\text{ and }\quad\dim [A(\dom A\cap\ker B)]\le \dim (\mul B),
\end{equation*}
where $\dom, \ker$ and $\mul$ designate the domain, kernel, respectively the multivalued part of a given linear relation. Several applications are also included.

\section{Douglas-Type Problems for Linear Relations}\label{s2}

Throughout the rest of the paper the symbols $\mathfrak{X}, \mathfrak{Y}$ and $\mathcal{Z}$ denote linear spaces over the real or complex field $\mathbb{K}$. A linear relation (multivalued operator) between $\mathfrak{X}$ and $\mathfrak{Y}$ is a linear subspace $R$ of the cartesian product $\mathfrak{X}\times \mathfrak{Y}$. The inverse 
\begin{equation*}
R^{-1}:=\{ (y, x) : (x, y)\in R\}
\end{equation*}
of $R$ is a linear relation between $\mathfrak{Y}$ and $\mathfrak{X}$. 

If $\mathfrak{X}_0$ is a subset of $\mathfrak{X}$ then the image of $\mathfrak{X_0}$ is defined as
\begin{equation*}
R(\mathfrak{X}_0 ):=\{ y\in\mathfrak{Y} : (x, y)\in R \mbox{ for some } x\in\mathfrak{X}_0\}.
\end{equation*}
For simplicity we write, for a given $x\in\mathfrak{X}, R(x)$ instead of $R(\{x\})$. If $\mathfrak{X}_0$ is a linear subspace then $R(\mathfrak{X}_0)$ is also a linear subspace. In particular, the domain $\dom R:=R^{-1}(\mathfrak{Y})$ and kernel $\ker R:=R^{-1}(0)$ are linear subspaces of $\mathfrak{X}$, while the range $\ran R :=R(\mathfrak{X})$ and the multivalued part $\mul R:=R(0)$ are linear subspaces of $\mathfrak{Y}$. 

Given two linear relations $R\subseteq\mathfrak{X}\times\mathfrak{Y}$ and $S\subseteq\mathfrak{Y}\times\mathfrak{Z}$ the product $SR$ is a linear relation from $\mathfrak{X}$ into $\mathfrak{Z}$, defined by
\begin{equation*}
SR:=\{(x, z) : (x, y)\in R \mbox{ and } (y, z)\in S \mbox{ for some } y\in\mathfrak{Y}\}.
\end{equation*}
Easy computations show that
\begin{equation}\label{eq0}
\dom (SR)=R^{-1}(\dom S\cap \ran R),\quad \ker (SR)=R^{-1}(\ker S\cap \ran R)
\end{equation}
and
\begin{equation}\label{eq0p}
\ran (SR)=S(\dom S\cap \ran R),\quad \mul(SR)=S(\dom S\cap \mul R).
\end{equation}

R. Arens \cite{Ar} characterized the equality of two linear relations in terms of their kernels and ranges:
\begin{proposition}\label{p0}
Let $R$ and $S$ be two linear relations between $\mathfrak{X}$ and $\mathfrak{Y}$ such that $R\subseteq S.$ Then $R=S$ if and only if $\ker R=\ker S$ and $\ran R=\ran S.$
\end{proposition}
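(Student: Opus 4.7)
The forward implication is immediate, since $R\subseteq S$ already forces $\ker R\subseteq \ker S$ and $\ran R\subseteq \ran S$, so equality of the relations gives equality of kernels and ranges. The content is in the converse, and the plan is to pick an arbitrary element of $S$ and exhibit it as an element of $R$ using the two hypotheses together with linearity of the relations.

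More concretely, let $(x,y)\in S$ be arbitrary. The first step uses the range hypothesis: since $y\in\ran S=\ran R$, there is some $x_0\in\mathfrak{X}$ with $(x_0,y)\in R$. Because $R\subseteq S$, we also have $(x_0,y)\in S$, so by linearity of $S$ the difference $(x-x_0,0)=(x,y)-(x_0,y)$ belongs to $S$. In other words $x-x_0\in\ker S$.

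The second step uses the kernel hypothesis: $\ker S=\ker R$, so $x-x_0\in\ker R$, i.e., $(x-x_0,0)\in R$. Adding $(x_0,y)\in R$ and using linearity of $R$ yields $(x,y)\in R$. Since $(x,y)$ was arbitrary in $S$, we conclude $S\subseteq R$, and combined with the assumption $R\subseteq S$ this gives $R=S$.

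There is essentially no obstacle: the argument is purely formal and relies only on the linear subspace structure of the graphs together with the elementary decomposition $(x,y)=(x_0,y)+(x-x_0,0)$, where the first summand handles the range and the second handles the kernel. The only thing worth being explicit about in the written proof is that the inclusion $R\subseteq S$ is used precisely to transfer $(x_0,y)$ from $R$ into $S$ in order to subtract it from $(x,y)$ within $S$.
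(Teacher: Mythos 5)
Your proof is correct and is the standard argument; the paper does not prove Proposition \ref{p0} itself (it is quoted from Arens), but your decomposition $(x,y)=(x_0,y)+(x-x_0,0)$, using the range equality to find $(x_0,y)\in R$ and the kernel equality to absorb the difference, is exactly the intended reasoning.
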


Our first result solves the factorization problem of R.G. Douglas in the generalized framework of linear relations.

\begin{theorem}\label{t1}
Let $A\subseteq\mathfrak{X}\times\mathfrak{Z}$ and $B\subseteq\mathfrak{Y}\times\mathfrak{Z}$ be two linear relations. The following statements are equivalent:
\begin{itemize}
\item[$(i)$] There exists a linear relation $C\subseteq\mathfrak{X}\times\mathfrak{Y}$ such that $A\subseteq BC$;
\item[$(ii)$] $\ran A\subseteq \ran B$;
\item[$(iii)$] $A\subseteq BB^{-1}A$.
\end{itemize}
\end{theorem}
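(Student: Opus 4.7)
The plan is to establish a cyclic chain $(iii) \Rightarrow (i) \Rightarrow (ii) \Rightarrow (iii)$; each implication will unwind directly from the definitions of composition, inverse, and range of a linear relation, together with the composition formulas (\ref{eq0}) and (\ref{eq0p}) already recorded.

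The implication $(iii) \Rightarrow (i)$ will be essentially free: I will simply exhibit the candidate $C := B^{-1}A$, which is a linear relation from $\mathfrak{X}$ to $\mathfrak{Y}$ by the general fact that the product and inverse of linear relations are linear relations, and then (iii) is literally the assertion $A \subseteq BC$. It is worth noting that this choice of $C$ is the natural ``universal'' witness, playing the role of the reduced Douglas solution $B^{\dagger}A$ from the bounded operator case, but freed here from any adjoint or range-closure structure.

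For $(i) \Rightarrow (ii)$, I will take an arbitrary $C$ satisfying $A \subseteq BC$ and chain inclusions of ranges, invoking (\ref{eq0p}) to write
\begin{equation*}
\ran A \subseteq \ran(BC) = B(\dom B \cap \ran C) \subseteq B(\mathfrak{Y}) = \ran B.
\end{equation*}

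The only step requiring any computation is $(ii) \Rightarrow (iii)$. Here I will pick an arbitrary $(x,z) \in A$; the hypothesis $z \in \ran A \subseteq \ran B$ supplies some $y \in \mathfrak{Y}$ with $(y,z) \in B$, equivalently $(z,y) \in B^{-1}$. Composing $(x,z) \in A$ with $(z,y) \in B^{-1}$ through the intermediate element $z$ produces $(x,y) \in B^{-1}A$, and composing this further with $(y,z) \in B$ through the intermediate element $y$ produces $(x,z) \in B(B^{-1}A) = BB^{-1}A$. I do not anticipate any genuine obstacle in the proof; the one small point to keep straight is the consistency of the witnesses $y$ and $z$ across two consecutive compositions, and this is immediate from the way $y$ was chosen.
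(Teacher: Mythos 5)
Your proof is correct and follows essentially the same route as the paper's: the substantive step is $(ii)\Rightarrow(iii)$, where both you and the authors take $(x,z)\in A$, choose $y$ with $(y,z)\in B$, and use $z$ itself as the intermediate element $u$ to conclude $(x,z)\in BB^{-1}A$. The only cosmetic difference is that you spell out $(iii)\Rightarrow(i)$ via the explicit witness $C=B^{-1}A$ and $(i)\Rightarrow(ii)$ via the range formula \eqref{eq0p}, two implications the paper dismisses as obvious.
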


\begin{proof}
The implications $(i)\Rightarrow(ii)$ and $(iii)\Rightarrow(i)$ are obvious.

$(ii)\Rightarrow(iii).$ Let $(x, z)\in A$. Note that, according to the definition of the product relation $BB^{-1}A$,
\begin{multline}\label{eq1}
(x, z)\in BB^{-1}A \mbox{ if and only if }\\
(x, u)\in A \mbox{ and } (y, u), (y, z)\in B \mbox{ for certain } y\in\mathfrak{Y} \mbox{ and } u\in\mathfrak{Z}.
\end{multline}
Since $z\in\ran B$ ($\supseteq \ran A$), there exists $y\in\mathfrak{Y}$ such that $(y, z)\in B.$ We can take $u=z$ to obtain that $(x, u)=(x, z)\in A$ and $(y, u)=(y, z)\in B.$ Hence $(x, z)\in BB^{-1}A$, as required.
\end{proof}

\begin{corollary}\label{c2}
Let $A\subseteq\mathfrak{X}\times\mathfrak{Z}$ and $B\subseteq\mathfrak{X}\times\mathfrak{Y}$ be two linear relations. The following statements are equivalent:
\begin{itemize}
\item[$(i)$] There exists a linear relation $C\subseteq\mathfrak{Y}\times\mathfrak{Z}$ such that $A\subseteq CB$;
\item[$(ii)$] $\dom A\subseteq \dom B$;
\item[$(iii)$] $A\subseteq AB^{-1}B$.
\end{itemize}
\end{corollary}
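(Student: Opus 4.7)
The plan is to derive this corollary from Theorem \ref{t1} by passing to inverse relations, exploiting the identity $(SR)^{-1}=R^{-1}S^{-1}$ together with $\dom A=\ran A^{-1}$.

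First I would observe that for any linear relations $R,S$ one has $R\subseteq S$ if and only if $R^{-1}\subseteq S^{-1}$, and that $(CB)^{-1}=B^{-1}C^{-1}$. Therefore the existence of $C\subseteq\mathfrak{Y}\times\mathfrak{Z}$ with $A\subseteq CB$ is equivalent to the existence of a linear relation $D\subseteq\mathfrak{Z}\times\mathfrak{Y}$ (namely $D=C^{-1}$) such that $A^{-1}\subseteq B^{-1}D$. Noting that $A^{-1}\subseteq\mathfrak{Z}\times\mathfrak{X}$ and $B^{-1}\subseteq\mathfrak{Y}\times\mathfrak{X}$ both take values in the common space $\mathfrak{X}$, Theorem \ref{t1} applies directly to the pair $(A^{-1},B^{-1})$.

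Next I would translate the three conditions of Theorem \ref{t1} back through the inversion. Condition $(i)$ of Theorem \ref{t1} for $(A^{-1},B^{-1})$ is exactly condition $(i)$ of the corollary, by the observation above. Condition $(ii)$ of Theorem \ref{t1}, namely $\ran A^{-1}\subseteq\ran B^{-1}$, translates word for word into $\dom A\subseteq\dom B$, which is condition $(ii)$ of the corollary. For condition $(iii)$ of Theorem \ref{t1}, I have $A^{-1}\subseteq B^{-1}(B^{-1})^{-1}A^{-1}=B^{-1}BA^{-1}$; inverting both sides and using $(SR)^{-1}=R^{-1}S^{-1}$ applied twice, together with $(B^{-1})^{-1}=B$, gives $A\subseteq AB^{-1}B$, which is condition $(iii)$ of the corollary.

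There is no real obstacle here: the proof is essentially a dualization, and the only care needed is the bookkeeping of the spaces between which each relation acts, and the correct use of $(SR)^{-1}=R^{-1}S^{-1}$. Thus the complete proof can be presented in a single line: apply Theorem \ref{t1} to $A^{-1}$ and $B^{-1}$ in place of $A$ and $B$, then invert.
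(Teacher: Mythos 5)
Your proposal is correct and coincides with the paper's own argument: the paper likewise applies Theorem \ref{t1} to the pair $(A^{-1},B^{-1})$, using $\dom A=\ran(A^{-1})$, $\dom B=\ran(B^{-1})$ and the identity $(B^{-1}BA^{-1})^{-1}=AB^{-1}B$ to translate the three conditions. No gaps; the bookkeeping of inverses is exactly as you describe.
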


\begin{proof}
In view of the formulas $\dom A=\ran(A^{-1}),\ \dom B=\ran(B^{-1})$ and
\begin{equation*}
A^{-1}\subseteq B^{-1}BA^{-1}\quad \mbox{iff}\quad A\subseteq(B^{-1}BA^{-1})^{-1}=AB^{-1}B
\end{equation*}
the proof follows immediately by Theorem \ref{t1} for the linear relations $A^{-1}$ and $B^{-1}$.
\end{proof}

Let us now suppose that $A$ and $B$ are linear relations between $\mathfrak{X}$ and $\mathfrak{Z}$, respectively $\mathfrak{Y}$ and $\mathfrak{Z}$. We observe, by \eqref{eq1}, that $z\in\ran(BB^{-1}A)$ if and only if
\begin{equation}\label{eq2}
(x, u)\in A \mbox{ and } (y, u), (y, z)\in B \mbox{ for certain } x\in\mathfrak{X}, y\in\mathfrak{Y} \mbox{ and } u\in\mathfrak{Z}.
\end{equation}
In other words, there exist $x\in\mathfrak{X}$ and $u\in\mathfrak{Z}$ such that $(x, u)\in A$ and $(0, u-z)\in B.$ More exactly, \eqref{eq2} can be rewritten in equivalent form as
\begin{equation*}
u-z\in\mul B \mbox{ for a certain } u\in\ran A.
\end{equation*}
We proved that
\begin{equation}\label{eq3}
\ran (BB^{-1}A)=\ran A+\mul B.
\end{equation}

Similarly, $x\in\ker(BB^{-1}A)$ if and only if
\begin{equation*}
(x, u)\in A \mbox{ and } (y, u)\in B \mbox{ for certain } u\in\mathfrak{Z} \mbox{ and } y\in\ker B.
\end{equation*}

Equivalently, there exists $u\in\mul B$ such that $(x, u)\in A.$ We deduce that $x\in A^{-1}(\mul B).$ Consequently,
\begin{equation}\label{eq4}
\ker(BB^{-1}A)=A^{-1}(\mul B).
\end{equation}

We take $R=A$ and $S=BB^{-1}A$ in Proposition \ref{p0} to obtain, according to formulas \eqref{eq3} and \eqref{eq4} and to Theorem \ref{t1}, that:

\begin{corollary}\label{c3}
Let $A\subseteq\mathfrak{X}\times\mathfrak{Z}$ and $B\subseteq\mathfrak{Y}\times\mathfrak{Z}$ be two linear relations. The following statements are equivalent: 
\begin{itemize}
\item[$(i)$]$A=BB^{-1}A$;
\item[$(ii)$] $\mul B\subseteq \ran A\subseteq \ran B$ and $A^{-1}(\mul B)\subseteq\ker A.$ 
\end{itemize}
In particular, if $B$ is (the graph of) an operator and $\ran A\subseteq \ran B$ then there exists a linear relation $C\subseteq\mathfrak{X}\times\mathfrak{Y}$ (one possible solution is $C=B^{-1}A)$ such that $A=BC.$ 
\end{corollary}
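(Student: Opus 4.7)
The plan is to apply Proposition \ref{p0} with $R=A$ and $S=BB^{-1}A$, using the ingredients already developed just above the corollary. The guiding observation is that the inclusion $A\subseteq BB^{-1}A$ is, by Theorem \ref{t1}, equivalent to $\ran A\subseteq \ran B$; so one of the ingredients of (ii) is forced on us purely in order to make Proposition \ref{p0} applicable. Once we have that inclusion, Proposition \ref{p0} tells us that $A=BB^{-1}A$ is equivalent to equality of kernels and equality of ranges, and both of these have already been computed in \eqref{eq3} and \eqref{eq4}.

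So the second step is to match up (i) with (ii) by rewriting those two equalities. From \eqref{eq3} we have $\ran(BB^{-1}A)=\ran A+\mul B$, and since $\ran A\subseteq \ran A+\mul B$ is automatic, the equality $\ran A=\ran(BB^{-1}A)$ collapses to the single inclusion $\mul B\subseteq \ran A$. Similarly, from \eqref{eq4} we have $\ker(BB^{-1}A)=A^{-1}(\mul B)$, and since $0\in\mul B$ gives $\ker A=A^{-1}(0)\subseteq A^{-1}(\mul B)$ automatically, the equality $\ker A=\ker(BB^{-1}A)$ collapses to $A^{-1}(\mul B)\subseteq \ker A$. Combining these two reductions with the condition $\ran A\subseteq \ran B$ (coming from the step above) yields exactly (ii); conversely, given (ii), we get $A\subseteq BB^{-1}A$ from Theorem \ref{t1} and both equalities above, so Proposition \ref{p0} delivers (i).

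For the particular case, the point is that if $B$ is (the graph of) an operator then $\mul B=\{0\}$, so $\mul B\subseteq \ran A$ is trivial and $A^{-1}(\mul B)=A^{-1}(0)=\ker A\subseteq \ker A$ is also trivial. Hence, under the single hypothesis $\ran A\subseteq \ran B$, condition (ii) is satisfied, and (i) gives $A=BB^{-1}A=BC$ with $C:=B^{-1}A$.

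None of the steps looks like a serious obstacle; the only thing to be mildly careful about is not to reverse the two non-trivial inclusions (e.g.\ $\mul B\subseteq \ran A$ rather than the other way around) when reading off the equalities of ranges and kernels, and to note explicitly that the opposite inclusions are automatic. The whole argument is essentially bookkeeping built on top of Theorem \ref{t1}, Proposition \ref{p0}, and the identities \eqref{eq3}--\eqref{eq4}.
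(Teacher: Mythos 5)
Your proof is correct and follows exactly the route the paper intends: apply Proposition \ref{p0} to $R=A$ and $S=BB^{-1}A$, using Theorem \ref{t1} for the inclusion $A\subseteq BB^{-1}A$ and the identities \eqref{eq3}--\eqref{eq4} to reduce the equalities of ranges and kernels to $\mul B\subseteq\ran A$ and $A^{-1}(\mul B)\subseteq\ker A$. The paper states this only as a one-line derivation, so your write-up simply makes explicit the same bookkeeping (including the observation that the reverse inclusions are automatic).
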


\begin{corollary}\label{c4}
Let $A\subseteq\mathfrak{X}\times\mathfrak{Z}$ and $B\subseteq\mathfrak{C}\times\mathfrak{Y}$ be two linear relations. The following statements are equivalent:
\begin{itemize}
\item[$(i)$] $A=AB^{-1}B$;
\item[$(ii)$] $\ker B\subseteq\dom A\subseteq \dom B$ and $A(\ker B)\subseteq\mul A.$
\end{itemize}
In particular, if $B^{-1}$ is (the graph of) an operator and $\dom A\subseteq \dom B$ then there exists a linear relation $C\subseteq\mathfrak{Y}\times\mathfrak{Z}$ (one possible solution is $C=AB^{-1}$) such that $A=CB.$
\end{corollary}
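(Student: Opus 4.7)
The plan is to derive Corollary \ref{c4} from Corollary \ref{c3} by passing to inverses, exactly mirroring the way Corollary \ref{c2} was obtained from Theorem \ref{t1}. The first step is to note, using $(SR)^{-1} = R^{-1}S^{-1}$, that
\begin{equation*}
A = AB^{-1}B \quad\text{iff}\quad A^{-1} = (AB^{-1}B)^{-1} = B^{-1}BA^{-1},
\end{equation*}
so the identity in (i) is precisely the identity of Corollary \ref{c3} applied to the linear relations $A^{-1}$ and $B^{-1}$ in place of $A$ and $B$.

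Next, I would translate condition (ii) of Corollary \ref{c3}, written for $A^{-1}$ and $B^{-1}$, back into conditions on $A$ and $B$ by means of the standard identities $\mul R^{-1} = \ker R$, $\ran R^{-1} = \dom R$, $\ker R^{-1} = \mul R$, together with $(R^{-1})^{-1} = R$. Under these translations the chain $\mul B^{-1} \subseteq \ran A^{-1} \subseteq \ran B^{-1}$ becomes $\ker B \subseteq \dom A \subseteq \dom B$, and the condition $(A^{-1})^{-1}(\mul B^{-1}) \subseteq \ker A^{-1}$ becomes $A(\ker B) \subseteq \mul A$. This yields the equivalence $(i)\Leftrightarrow(ii)$.

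For the concluding ``in particular'' statement, I would observe that the hypothesis ``$B^{-1}$ is the graph of an operator'' is exactly $\mul B^{-1} = \ker B = \{0\}$. Under this assumption the two supplementary conditions in (ii) reduce to $\{0\} \subseteq \dom A$ and $A(0) = \mul A \subseteq \mul A$, both of which are automatic, so the single assumption $\dom A \subseteq \dom B$ forces (ii) and hence $A = AB^{-1}B$, exhibiting $C := AB^{-1}$ as a solution of $A = CB$. I do not anticipate a serious obstacle here; the only thing to watch is the bookkeeping when inverting the relation and translating each of $\mul, \ker, \ran, \dom$ correctly, so that the hypotheses of Corollary \ref{c3} become the hypotheses stated in (ii) of Corollary \ref{c4} without a domain/range swap error.
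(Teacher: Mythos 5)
Your proposal is correct and is exactly the argument the paper intends: Corollary \ref{c4} is obtained from Corollary \ref{c3} by passing to the inverse relations, in the same way Corollary \ref{c2} is deduced from Theorem \ref{t1}, and your translations $\mul B^{-1}=\ker B$, $\ran A^{-1}=\dom A$, $\ran B^{-1}=\dom B$, $\ker A^{-1}=\mul A$ and the identity $(AB^{-1}B)^{-1}=B^{-1}BA^{-1}$ are all handled correctly, including the ``in particular'' specialization where $\ker B=\{0\}$ makes the two supplementary conditions automatic.
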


\section{The Problem $A\subseteq BX$ with Operator Solutions}\label{s3}

It is our aim in this section to characterize, for two given linear relations $A$ and $B$, the existence of an operator $C$ such that $A\subseteq BC.$

Let us firstly note that, by the last part of \eqref{eq0p} (specialized for $S=B^{-1}$ and $R=A$) and according to Theorem \ref{t1}, the conditions $\ran A\subseteq \ran B$ and $B^{-1}(\mul A)=\{0\}$ are sufficient for the existence of an operator $C$ such that $A\subseteq BC.$

The proposed problem can be completely solved by the following:
\begin{theorem}\label{t5}
Let $A\subseteq\mathfrak{X}\times\mathfrak{Z}$ and $B\subseteq\mathfrak{Y}\times\mathfrak{Z}$ be two linear relations. The following statements are equivalent:
\begin{itemize}
\item[$(i)$] There exists an operator $C$ between $\mathfrak{X}$ and $\mathfrak{Y}$ such that that $A\subseteq BC;$
\item[$(ii)$] $\ran A\subseteq \ran B$ and $\mul A\subseteq \mul B;$
\item[$(iii)$] For every $x\in\dom A$ there exists $y\in\dom B$ such that $A(x)\subseteq B(y).$
\end{itemize}
\end{theorem}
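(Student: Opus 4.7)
The plan is to establish the cycle $(i)\Rightarrow(ii)\Rightarrow(iii)\Rightarrow(i)$. The first two implications are routine. For $(i)\Rightarrow(ii)$, if $A\subseteq BC$ then $\ran A\subseteq \ran(BC)\subseteq \ran B$ by \eqref{eq0p}, and any $(0, z)\in A$ produces some $y\in\mathfrak{Y}$ with $(0, y)\in C$ and $(y, z)\in B$; the hypothesis that $C$ is an operator (equivalently, $\mul C=\{0\}$) forces $y=0$, so $z\in\mul B$. For $(ii)\Rightarrow(iii)$, fix $x\in\dom A$ and pick any $z_0\in A(x)$; since $z_0\in\ran A\subseteq\ran B$, there is $y\in\dom B$ with $(y, z_0)\in B$, and every other $z\in A(x)$ satisfies $z-z_0\in \mul A\subseteq\mul B$, which gives $(y, z)=(y, z_0)+(0, z-z_0)\in B$, i.e., $A(x)\subseteq B(y)$.

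The main work lies in $(iii)\Rightarrow(i)$, where a single-valued linear selector has to be manufactured from a pointwise selection. The first observation is that $(iii)$ evaluated at $x=0$ yields $y'\in\dom B$ with $\mul A=A(0)\subseteq B(y')$; since $0\in\mul A$ forces $(y', 0)\in B$, the coset $B(y')$ reduces to $\mul B$, hence $\mul A\subseteq \mul B$. Next, fix a Hamel basis $\{x_\alpha\}_{\alpha\in I}$ of $\dom A$; by $(iii)$, choose for each $\alpha$ an element $y_\alpha\in\dom B$ with $A(x_\alpha)\subseteq B(y_\alpha)$, and define the operator $C\colon\dom A\to\mathfrak{Y}$ by setting $C x_\alpha:=y_\alpha$ and extending linearly.

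To verify $A\subseteq BC$, take $(x, z)\in A$ and write $x=\sum c_\alpha x_\alpha$ as a finite linear combination; set $y:=Cx=\sum c_\alpha y_\alpha\in\dom B$. For each index with $c_\alpha\neq 0$ pick $z_\alpha\in A(x_\alpha)\subseteq B(y_\alpha)$ and form $z_0:=\sum c_\alpha z_\alpha$. Since $A$ and $B$ are linear subspaces, $(x, z_0)=\sum c_\alpha(x_\alpha, z_\alpha)\in A$ and $(y, z_0)=\sum c_\alpha(y_\alpha, z_\alpha)\in B$. Subtracting from $(x, z)\in A$ yields $z-z_0\in\mul A\subseteq\mul B$, whence $(0, z-z_0)\in B$ and, by addition, $(y, z)\in B$. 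Combined with $(x, y)\in C$, this gives $(x, z)\in BC$, as required. The principal obstacle is precisely that when $A$ is multivalued a generic $z\in A(x)$ cannot be decomposed linearly over the basis; the inclusion $\mul A\subseteq \mul B$, extracted from $(iii)$ at the origin, is exactly what bridges the gap between the chosen $z_0$ and the given $z$.
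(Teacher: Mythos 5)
Your proof is correct. The implications $(i)\Rightarrow(ii)$ and $(ii)\Rightarrow(iii)$ match the paper's argument essentially verbatim, and your extraction of $\mul A\subseteq\mul B$ from $(iii)$ evaluated at $x=0$ is exactly the observation the paper also needs (it appears in step $(a)$ of its construction). Where you genuinely diverge is in $(iii)\Rightarrow(i)$: the paper runs a Zorn's-lemma argument on the poset of pairs $(\mathfrak{A},C)$ with $\mathfrak{A}\subseteq\dom A$ and $(\mathfrak{A}\times\mathfrak{Z})\cap A\subseteq BC$, extending a maximal element one dimension at a time, whereas you fix a Hamel basis $\{x_\alpha\}$ of $\dom A$ up front, make all the selections $y_\alpha$ simultaneously, and define $C$ by linear extension. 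Both routes invoke the axiom of choice (Zorn versus existence of a Hamel basis plus a choice function), and your verification that $A\subseteq BC$ --- decomposing a generic $(x,z)\in A$ through an auxiliary $z_0=\sum c_\alpha z_\alpha$ and closing the gap $z-z_0$ via $\mul A\subseteq\mul B$ --- is precisely the computation the paper performs locally in its extension step $(b)$, just done globally in one stroke. Your version is shorter and arguably more transparent; the paper's formulation has the mild advantage that it exhibits every partial solution $(\mathfrak{A},C)$ as extendable, which dovetails with the description of the general solution in Remark \ref{r7}, but for the theorem as stated the two arguments are of equal strength.
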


\begin{proof}
$(i)\Rightarrow (ii).$ If $A\subseteq BC$ then, obviously, $\ran A\subseteq \ran B.$ In addition, if $z\in \mul A$ (i.e., $(0, z)\in A$) then $(y, z)\in B$ for a certain $y\in\mul C.$ Since $C$ is (the graph of) an operator (i.e., $\mul C=\{0\}$) we obtain that $y=0$ so $z\in\mul B.$ Hence $\mul A\subseteq \mul B.$

$(ii)\Rightarrow (iii).$ Let $x\in \dom A$ and $z\in \mathfrak{Z}$ such that $(x, z)\in A.$ As $z\in\ran A\subseteq\ran B$ it follows that $(y, z)\in B$ for a certain $y\in\mathfrak{Y}.$

We prove that $A(x)\subseteq B(y).$ To this aim let $z'\in A(x).$ Then $z'\in \ran A\subseteq\ran B$ so there exists $y'\in\mathfrak{Y}$ such that $(y', z')\in B$. Since $(x, z)$ and $(x, z')$ are both elements of $A$ it follows that $z-z'\in\mul A\subseteq\mul B.$ We deduce that $(y, z')=(y, z)-(0, z-z')\in B.$ Thus $z'\in B(y)$ as required.

$(iii)\Rightarrow(i).$ The proof follows by an application of Zorn's lemma.

Let us consider the set
\begin{multline*}
\mathcal{F}:=\bigl\{ (\mathfrak{A}, C) : \mathfrak{A} \mbox{ is a subspace of } \dom A,\ C \mbox{ is (the graph of)}\\ 
\mbox{ an operator with domain } \mathfrak{A} \mbox{ and } (\mathfrak{A}\times \mathfrak{Z})\cap A\subseteq BC\bigr\}
\end{multline*}
endowed with the partial order
\begin{multline*}
(\mathfrak{A}_1, C_1)\le (\mathfrak{A}_2, C_2)\quad\mbox{ if }\quad \mathfrak{A}_1\subseteq\mathfrak{A}_2 \mbox{ and } C_1\subseteq C_2,\\
((\mathfrak{A}_1, C_1),(\mathfrak{A}_2, C_2)\in\mathcal{F}).
\end{multline*}

\medskip
$(a)$ $\mathcal{F}\ne \emptyset.$

\medskip
Let $x\in\dom A$ and $y\in\dom B$ such that $A(x)\subseteq B(y)$ (according to $(iii)$). Note that, if $x=0$ and $z\in A(0)\subseteq B(y)$, then $z\in B(0)$ (as $(y, z)$ and $(y, 0)$ are both elements of $B$). Consequently, if $x=0$ then we can safely consider $y=0.$ Let us define
\begin{equation*}
\mathfrak{A}_x:=\{ \lambda x\}_{\lambda\in\mathbb{K}}\quad\mbox{and}\quad C_x=\{ (\lambda x, \lambda y)\}_{\lambda\in\mathbb{K}}.
\end{equation*}
Then $\mathfrak{A}_x$ is a linear subspace of $\dom A$ and $C_x$ is (the graph of) an operator on $\dom C_x=\mathfrak{A}_x.$ Moreover, if $(x', z)\in (\mathfrak{A}_x\times \mathfrak{Z})\cap A$ then $x'=\lambda x$ for a certain $\lambda\in\mathbb{K}, (x', \lambda y)\in C_x$ and, since $z\in A(\lambda x)\subseteq B(\lambda y), (\lambda y, z)\in B.$ Hence $(x', z)\in BC_x.$ It follows that $(\mathfrak{A}_x\times\mathfrak{Z})\cap A\subseteq BC_x.$ Hence $(\mathfrak{A}_x, C_x)\in\mathcal{F}.$

\bigskip
$(b)$ \textit{If $(\mathfrak{A}, C)\in\mathcal{F}$ and $\mathfrak{A}\ne\dom A$ then there exists $(\tilde{\mathfrak{A}}, \tilde{C})\in\mathcal{F}$ such that $(\mathfrak{A}, C)\le (\tilde{\mathfrak{A}}, \tilde{C})$ and $(\mathfrak{A}, C)\ne(\tilde{\mathfrak{A}}, \tilde{C})$.}

\medskip
Let $x\in\dom A\setminus \mathfrak{A}$ and $y\in\dom B$ such that $A(x)\subseteq B(y).$ We define
\begin{equation*}
\tilde{\mathfrak{A}}:=\{ \lambda x\}_{\lambda\in\mathbb{K}}\oplus \mathfrak{A}\quad\text{and}\quad
\tilde{C}:=\{ (\lambda x+a, \lambda y+b) : \lambda\in\mathbb{K}, (a, b)\in C\}
\end{equation*}
(the symbol ``$\oplus$'' denotes a direct sum). It is not hard to observe that $\tilde{C}$ is (the graph) of an operator between $\mathfrak{X}$ and $\mathfrak{Y}$, it extends $C$ and $\dom\tilde{C}=\tilde{\mathfrak{A}}.$ In order to prove that $(\tilde{\mathfrak{A}}\times\mathfrak{Z})\cap A\subseteq B\tilde{C}$ we take $\lambda\in\mathbb{K}, a\in\mathfrak{A}$ and $z\in\mathfrak{Z}$ such that $(\lambda x+a, z)\in A.$ Let $z_2\in\mathfrak{Z}$ with $(a, z_2)\in (\mathfrak{A}\times\mathfrak{Z})\cap A\subseteq BC,$ so $(C(a), z_2)\in B.$ Also, for $z_1\in A(\lambda x)\subseteq B(\lambda y),$
\begin{equation*}
(0, z-z_1-z_2)=(\lambda x+a, z)-(\lambda x, z_1)-(a, z_2)\in A.
\end{equation*}
In other words $z-z_1-z_2\in A(0)\subseteq B(0).$ We deduce that 
\begin{equation*}
(\tilde{C}(\lambda x+a), z)=(\lambda y, z_1)+(C(a), z_2)+(0, z-z_1-z_2)\in B.
\end{equation*}
Therefore $(\tilde{\mathfrak{A}}\times\mathfrak{Z})\cap A\subseteq B\tilde{C}.$

\bigskip
$(c)$ \textit{Every chain $\mathcal{F}_0=\{ (\mathfrak{A}_\alpha, C_\alpha)\}_{\alpha\in I}$ has an upper bound in $\mathcal{F}.$}

\medskip
Indeed, the chain $\mathcal{F}_0$ has in $\mathcal{F}$ the upper bound $(\bigcup_{\alpha\in I} \mathfrak{A}_\alpha, \bigcup_{\alpha\in I} C_\alpha).$

\medskip
According to Zorn's lemma $\mathcal{F}$ has a maximal element $(\mathfrak{A}, C).$ Clearly $\mathfrak{A}=\dom A$ since, otherwise, $(\mathfrak{A}, C)$ can be ``strictly'' extended in $\mathcal{F}$ (by $(b)$), which contradicts its maximality. Also, by the definition of $\mathcal{F}, A=(\mathfrak{A}\times\mathfrak{Z})\cap A\subseteq BC.$ This completes the proof.
\end{proof}

Our result extends and improves the theorems of R.D. Douglas and Z. Sebesty\'en mentioned in the introduction:
\begin{corollary}\label{c6}
Let $A$ be (the graph of) an operator between $\mathfrak{X}$ and $\mathfrak{Z}$ and $B$ a linear relation between $\mathfrak{Y}$ and $\mathfrak{Z}.$ The following statements are equivalent: 
\begin{itemize}
\item[$(i)$] There exists an operator $C$ between $\mathfrak{X}$ and $\mathfrak{Y}$ such that $A\subseteq BC$;
\item[$(ii)$] $\ran A\subseteq \ran B.$
\end{itemize}
\end{corollary}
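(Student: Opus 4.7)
The plan is to obtain this corollary as an immediate specialization of Theorem \ref{t5}. Since $A$ is (the graph of) an operator, its multivalued part satisfies $\mul A=\{0\}$, and hence the inclusion $\mul A\subseteq\mul B$ holds automatically for any linear relation $B$. Consequently, condition $(ii)$ of Theorem \ref{t5}, which reads ``$\ran A\subseteq\ran B$ and $\mul A\subseteq\mul B$'', collapses to the single requirement $\ran A\subseteq\ran B$ in the present setting.

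With this observation in hand, the two implications of the corollary are handled as follows. For $(i)\Rightarrow(ii)$, if an operator $C$ between $\mathfrak{X}$ and $\mathfrak{Y}$ satisfies $A\subseteq BC$, then directly from the definition of a product relation $\ran A\subseteq\ran(BC)\subseteq\ran B$. For $(ii)\Rightarrow(i)$, assuming $\ran A\subseteq\ran B$, together with the trivially valid $\mul A=\{0\}\subseteq\mul B$, we have exactly condition $(ii)$ of Theorem \ref{t5}; applying that theorem produces an operator $C$ between $\mathfrak{X}$ and $\mathfrak{Y}$ with $A\subseteq BC$.

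There is no real obstacle here: the whole content is the observation that operator-ness of $A$ automatically makes the secondary hypothesis $\mul A\subseteq\mul B$ in Theorem \ref{t5} vacuous. The only thing worth spelling out explicitly, for the reader's comfort, is that the range inclusion is both necessary in full generality and, under the operator assumption on $A$, sufficient.
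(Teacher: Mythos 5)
Your proof is correct and is exactly the argument the paper intends: Corollary \ref{c6} is stated as an immediate consequence of Theorem \ref{t5}, with the only observation being that $\mul A=\{0\}$ for an operator $A$ renders the condition $\mul A\subseteq\mul B$ vacuous. Nothing further is needed.
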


\begin{remark}\label{r7}
Let $A\subseteq \mathfrak{X}\times\mathfrak{Z}$ and $B\subseteq \mathfrak{Y}\times\mathfrak{Z}$ be two linear relations satisfying one (and hence all) of the equivalent statements of Theorem \ref{t5} and consider a particular operator solution $X=C_0$ of the problem $A\subseteq BX.$ A given operator $C$ between $\mathfrak{X}$ and $\mathfrak{Y}$ is a solution to this problem if and only if $C$ extends the operator 
\begin{equation*}
C_0|_{\dom A}+C_1
\end{equation*}
for a certain operator $C_1$ with domain $\dom A$ and range contained in $\ker B.$
\qed
\end{remark}

\section{The Problem $A\subseteq XB$ with Operator Solutions. Linearly Independent Systems}\label{s4}

We pass now to the problem regarding the existence, for two given linear relations $A\subseteq \mathfrak{X}\times\mathfrak{Z}$ and $B\subseteq \mathfrak{X}\times\mathfrak{Y}$, of an operator $C$ between $\mathfrak{Y}$ and $\mathfrak{Z}$ such that $A\subseteq CB.$

We start our discussion with a particular case:

\begin{remark}\label{r8}
\textit{If $\ran A=\{ 0\}$ then there exists and operator $C$ such that $A\subseteq CB$ if and only if $\dom A\subseteq \dom B$; one possible solution is the null operator with domain $\ran B.$}

The direct implication being obvious we only have to prove that if $\dom A$ $\subseteq \dom B$ and $C=0_{\ran B}$ then $A\subseteq CB.$ Indeed, if $x\in\ker A\subseteq \dom B$ then there exists $y\in \mathfrak{Y}$ with $(x, y)\in B.$ Since $C(y)=0$ it follows that $(x, 0)\in CB$, as required.
\qed
\end{remark}

If $\mathfrak{X}_0$ is a subset of $\mathfrak{X}$ we denote by $\spc(\mathfrak{X}_0)$ the linear subspace of $\mathfrak{X}$ generated by $\mathfrak{X}_0$. In the following $\{ 0\}$ will be conventionally considered as the only linearly independent system (Hamel basis) of the null space.

We are now in position to solve the proposed problem:
\begin{theorem}\label{t9}
Let $A\subseteq\mathfrak{X}\times\mathfrak{Z}$ and $B\subseteq\mathfrak{X}\times\mathfrak{Y}$ be two linear relations. The following statements are equivalent:
\begin{itemize}
\item[$(i)$] There exists an operator $C$ between $\mathfrak{Y}$ and $\mathfrak{Z}$ such that $A\subseteq CB$;
\item[$(ii)$] There exist a:
\begin{itemize}
\item[$(a)$] Hamel basis $\{z_\alpha\}_{\alpha\in I}$ for $\ran A$,
\item[$(b)$] linearly independent family $\{ y_\alpha\}_{\alpha\in I}$ such that $y_\alpha\in BA^{-1}(z_\alpha),\ \alpha\in I$,
\item[$(c)$] Hamel basis $\{ x'_\beta\}_{\beta\in J}$ for $\ker A,$
\item[$(d)$] family $\{ y'_\beta\}_{\beta\in J}$ such that $y'_\beta\in B(x'_\beta),\ \beta\in J$ 
\end{itemize}
and 
\begin{equation*}
\spc\{ y_\alpha\}_{\alpha\in I}\cap\spc\{ y'_\beta\}_{\beta\in J}=\{ 0\};
\end{equation*}
\item[$(iii)$]
\begin{itemize}
\item[$(a)$] For every Hamel basis $\{ z_\alpha\}_{\alpha\in I}$ for $\ran A$,
\item[$(b)$] there exists a linearly independent family $\{ y_\alpha\}_{\alpha\in I}$ such that $y_\alpha\in BA^{-1}(z_\alpha),\ \alpha\in I$ 
\end{itemize}
and
\begin{itemize}
\item[$(c)$] for every Hamel basis $\{ x'_\beta\}_{\beta\in J}$ for $\ker A,$
\item[$(d)$] there exists a family $\{ y'_\beta\}_{\beta\in J}$ such that $y'_\beta\in B(x'_\beta),\ \beta\in J$
\end{itemize}
and
\begin{equation*}
\spc\{ y_\alpha\}_{\alpha\in I}\cap \spc\{ y'_\beta\}_{\beta\in J}=\{0\}.
\end{equation*}
\end{itemize}
\end{theorem}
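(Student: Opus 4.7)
My plan is the circular chain $(iii)\Rightarrow(ii)\Rightarrow(i)\Rightarrow(iii)$. The implication $(iii)\Rightarrow(ii)$ is immediate: pick any Hamel bases of $\ran A$ and $\ker A$ (which exist by Zorn's lemma, using the stated convention for $\{0\}$) and apply $(iii)$.

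For $(ii)\Rightarrow(i)$, I construct $C$ explicitly. On the subspace $V:=\spc\{y_\alpha\}_{\alpha\in I}+\spc\{y'_\beta\}_{\beta\in J}$, which is a direct sum by the intersection hypothesis, define
\[
C\Bigl(\sum_\alpha\lambda_\alpha y_\alpha+v\Bigr):=\sum_\alpha\lambda_\alpha z_\alpha,\qquad v\in\spc\{y'_\beta\}_{\beta\in J}.
\]
Well-definedness follows from the linear independence of $\{y_\alpha\}$ together with the directness of the sum; any elements of $\spc\{y'_\beta\}$ are sent to $0$ without needing $\{y'_\beta\}$ to be independent. Then extend $C$ linearly (by $0$ on a Hamel complement of $V$) to all of $\mathfrak{Y}$. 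To verify $A\subseteq CB$, fix $(x,z)\in A$ and expand $z=\sum_\alpha\lambda_\alpha z_\alpha$. For each $\alpha$ choose, using $y_\alpha\in BA^{-1}(z_\alpha)$, some $x_\alpha$ with $(x_\alpha,z_\alpha)\in A$ and $(x_\alpha,y_\alpha)\in B$, and set $\tilde x:=\sum_\alpha\lambda_\alpha x_\alpha$, $\tilde y:=\sum_\alpha\lambda_\alpha y_\alpha$. By the linearity of $A$ and $B$, $(\tilde x,z)\in A$ and $(\tilde x,\tilde y)\in B$. Then $x-\tilde x\in\ker A$ expands as $\sum_\beta\mu_\beta x'_\beta$, and $\tilde y':=\sum_\beta\mu_\beta y'_\beta$ satisfies $(x-\tilde x,\tilde y')\in B$ by condition $(d)$. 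Consequently $(x,\tilde y+\tilde y')\in B$, and by construction $C(\tilde y+\tilde y')=\sum_\alpha\lambda_\alpha z_\alpha=z$, so $(x,z)\in CB$.

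For $(i)\Rightarrow(iii)$, let $C$ be an operator with $A\subseteq CB$, and fix any Hamel bases $\{z_\alpha\}$ of $\ran A$ and $\{x'_\beta\}$ of $\ker A$. For each $\alpha$, pick $x_\alpha\in A^{-1}(z_\alpha)$; since $(x_\alpha,z_\alpha)\in A\subseteq CB$ there exists $y_\alpha$ with $(x_\alpha,y_\alpha)\in B$ and $C(y_\alpha)=z_\alpha$, so $y_\alpha\in BA^{-1}(z_\alpha)$. The operator $C$ forces linear independence: a relation $\sum_\alpha\lambda_\alpha y_\alpha=0$ yields $\sum_\alpha\lambda_\alpha z_\alpha=C(0)=0$, hence all $\lambda_\alpha=0$. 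Similarly, since $(x'_\beta,0)\in A\subseteq CB$, choose $y'_\beta$ with $(x'_\beta,y'_\beta)\in B$ and $C(y'_\beta)=0$. The same trick yields the intersection condition: if $\sum_\alpha\lambda_\alpha y_\alpha=\sum_\beta\mu_\beta y'_\beta$, applying $C$ gives $\sum_\alpha\lambda_\alpha z_\alpha=0$, whence all $\lambda_\alpha=0$ and both sides vanish.

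The main obstacle is the verification in $(ii)\Rightarrow(i)$: one has to show that a single operator can simultaneously encode the ``range side'' data $(y_\alpha\mapsto z_\alpha)$ and the ``kernel side'' data $(y'_\beta\mapsto 0)$, and that this prescription actually captures every pair $(x,z)\in A$. The decomposition $x=\tilde x+(x-\tilde x)$, with $\tilde x$ hitting the range part and $x-\tilde x\in\ker A$ absorbed into the kernel part via $(d)$, is the key device that couples the two sides of the construction.
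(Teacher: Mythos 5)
Your proposal is correct and follows essentially the same route as the paper: the chain $(iii)\Rightarrow(ii)\Rightarrow(i)\Rightarrow(iii)$, with the same explicit construction of $C$ on $\spc\{y_\alpha\}_{\alpha\in I}\oplus\spc\{y'_\beta\}_{\beta\in J}$ and the same decomposition $x=\tilde x+(x-\tilde x)$ with $x-\tilde x\in\ker A$ for the verification of $A\subseteq CB$. The only cosmetic difference is that you extend $C$ by zero to all of $\mathfrak{Y}$, whereas the paper leaves it defined on the direct sum; this changes nothing.
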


\begin{proof}
The implication $(iii)\Rightarrow(ii)$ is obvious.

$(i)\Rightarrow(iii).$ Let $C$ be an operator between $\mathfrak{Y}$ and $\mathfrak{Z}$ such that $A\subseteq CB$ and $\{ z_\alpha\}_{\alpha\in I}$ a Hamel basis for $\ran A.$ Then, for every fixed $\alpha\in I,$ there exists $x_\alpha\in\mathfrak{X}$ with $(x_\alpha, z_\alpha)\in A\subseteq CB.$ It follows, by the definition of the product relation, that there exists $y_\alpha\in B(x_\alpha)$ such that $z_\alpha=C(y_\alpha).$ We deduce that $y_\alpha\in BA^{-1}(z_\alpha).$ Moreover, the family $\{ y_\alpha\}_{\alpha\in I}$ is linearly independent: if $\sum_{\alpha\in I_0}\lambda_\alpha y_\alpha=0$ for a certain finite set $\{ \lambda_\alpha\}_{\alpha\in I_0\subseteq I}\subseteq\mathbb{K}$ then $\sum_{\alpha\in I_0}\lambda_\alpha z_\alpha=\sum_{\alpha\in I_0}\lambda_{\alpha}C(y_\alpha)=C(0)=0,$ hence $\lambda_\alpha=0\ (\alpha\in I_0).$

Let $\{ x'_\beta\}_{\beta\in J}$ be a Hamel basis for $\ker A.$ Then, for every fixed $\beta\in J, (x'_\beta, 0)\in A\subseteq CB.$ It follows, by the definition of the product relation, that there exists $y'_\beta\in B(x'_\beta)$ such that $C(y'_\beta)=0.$

It remains to prove that $\spc\{ y_\alpha\}_{\alpha\in I}\cap \spc\{ y'_\beta\}_{\beta\in J}=\{0\}.$ To this aim let us consider finite subsets $\{ \lambda_\alpha\}_{\alpha\in I_0\subseteq I}$ and $\{\lambda'_{\beta}\}_{\beta\in J_0\subseteq J}$ of $\mathbb{K}$ such that $\sum_{\alpha\in I_0}\lambda_\alpha y_\alpha=\sum_{\beta\in J_0}\lambda'_\beta y'_\beta.$ Then
\begin{equation*}
\sum_{\alpha\in I_0}\lambda_\alpha z_\alpha=\sum_{\alpha\in I_0}\lambda_\alpha C(y_\alpha)=\sum_{\beta\in J_0}\lambda'_\beta
C(y'_\beta)=0.
\end{equation*}
We deduce that $\lambda_\alpha=0\ (\alpha\in I_0).$ Thus $0$ is the only possible element of $\spc\{y_\alpha\}_{\alpha\in I}\cap \spc\{ y'_\beta\}_{\beta\in J}.$

$(ii)\Rightarrow(i).$ Let $\{ z_\alpha\}_{\alpha\in I}, \{y_\alpha\}_{\alpha\in I}, \{ x'_\beta\}_{\beta\in J}, \{ y'_\beta\}_{\beta\in J}$ be families with the properties in the statements of $(ii).$ We define a linear operator $C$ with 
\begin{equation*}
\dom C=\spc\{ y_\alpha\}_{\alpha\in I}\oplus \spc\{ y'_\beta\}_{\beta\in J} 
\end{equation*}
by the formula
\begin{equation}\label{eq5}
C\Bigl(\sum_{\alpha\in I_0}\lambda_\alpha y_\alpha+\sum_{\beta\in J_0}\lambda'_\beta y'_\beta\Bigr):=\sum_{\alpha\in I_0}\lambda_\alpha z_\alpha
\end{equation}
for every finite sets $\{\lambda_\alpha\}_{\alpha\in I_0\subseteq I}$, $\{\lambda'_\beta\}_{\beta\in J_0\subseteq J}\subseteq \mathbb{K}$. Since the family $\{ y_\alpha\}_{\alpha\in I}$ is linearly independent and the sum between the subspaces (of $\mathfrak{Y}$) $\spc\{ y_\alpha\}_{\alpha\in I}$ and $\spc\{ y'_\beta\}_{\beta\in J}$ is direct we deduce immediately that the definition \eqref{eq5} is correct.

As $C$ is obviously linear it remains to prove that $A\subseteq CB.$ To this aim let $(x, z)\in A$ and consider a finite set $\{ \lambda_\alpha\}_{\alpha\in I_0\subseteq I}\subseteq \mathbb{K}$ such that $z=\sum_{\alpha\in I_0}\lambda_\alpha z_\alpha.$ It follows that $x-\sum_{\alpha\in I_0}\lambda_\alpha x_\alpha\in\ker A,$ where, for $\alpha\in I, x_\alpha\in A^{-1}(z_\alpha)\cap B^{-1}(y_\alpha).$ Hence 
\begin{equation*}
x=\sum_{\alpha\in I_0}\lambda_\alpha x_\alpha +\sum_{\beta\in J_0} \lambda'_\beta x'_\beta
\end{equation*}
for a certain finite set $\{ \lambda'_\beta\}_{\beta\in J_0\subseteq J}\subseteq\mathbb{K}.$ Let $y=\sum_{\alpha\in I_0}\lambda_\alpha y_\alpha+\sum_{\beta\in J_0}\lambda'_\beta y'_\beta\in\mathfrak{Y}.$
Then
\begin{equation*}
(x, y)=\sum_{\alpha\in I_0}\lambda_\alpha(x_\alpha, y_\alpha)+\sum_{\beta\in J_0} \lambda'_\beta(x'_\beta, y'_\beta)\in B
\end{equation*}
and
\begin{equation*}
C(y)=\sum_{\alpha\in I_0}\lambda_\alpha z_\alpha=z.
\end{equation*}
We deduce that $(x, z)\in CB$, as required.
\end{proof}

Every operator solution of the equation $A\subseteq XB$ has the form \eqref{eq5} or extends an operator of the form \eqref{eq5}. More precisely it holds:
\begin{remark}\label{r10}
If the families $\{ z_\alpha\}_{\alpha\in I}, \{ y_\alpha\}_{\alpha\in I}, \{ x'_\beta\}_{\beta\in J}$ and $\{ y'_\beta\}_{\beta\in J}$ satisfy Theorem \ref{t9} $(ii)$ then the operator $C$ defined by \eqref{eq5} and all of its extensions are solutions to the problem $A\subseteq XB.$ Moreover,
\begin{equation*}
\dom C = \spc\{ y_\alpha\}_{\alpha\in I}\oplus \spc\{ y'_\beta\}_{\beta\in J},\qquad
\ran C = \ran A
\end{equation*}
and
\begin{equation*}
\ker C=\spc\{ y'_\beta\}_{\beta\in J.}
\end{equation*}

Conversely, if $C$  is an operator between $\mathfrak{Y}$ and $\mathfrak{Z}$ which satisfies $A\subseteq CB$ then there exist families $\{ z_\alpha\}_{\alpha\in I}, \{ y_\alpha\}_{\alpha\in I}, \{x'_\beta\}_{\beta\in J}$ and $\{ y'_\beta\}_{\beta\in J}$ with the properties of Theorem \ref{t9} $(ii)$ such that $\dom C \supseteq \spc\{ y_\alpha\}_{\alpha\in I}\oplus \spc\{ y'_\beta\}_{\beta\in J}$ and $C|_{\spc\{y_\alpha\}_{\alpha\in I}\oplus \spc\{ y'_\beta\}_{\beta\in J}}$ has the form \eqref{eq5}.
\qed
\end{remark}

The problem $A\subseteq XB$ has an operator solution $X=C$ which is injective if and only if there exist families $\{ z_\alpha\}_{\alpha\in I}, \{ y_\alpha\}_{\alpha\in I}, \{x'_\beta\}_{\beta\in J}$ and $\{ y'_\beta\}_{\beta\in J}$ satisfying Theorem \ref{t9} $(ii)$ and, moreover,  $y'_\beta=0,\ \beta\in J.$ Condition $0\in B(x'_\beta),\ \beta\in J$ can be rewritten as $x'_\beta\in\ker B,\ \beta\in J.$ Equivalently, $\ker A\subseteq \ker B.$ We obtain that:
\begin{corollary}\label{c11}
Let $A\subseteq \mathfrak{X}\times\mathfrak{Z}$ and $B\subseteq \mathfrak{X}\times\mathfrak{Y}$ be two linear relations. The following statements are equivalent:
\begin{itemize}
\item[$(i)$] There exists an injective operator $C$ between $\mathfrak{Y}$ and $\mathfrak{Z}$ such that $A\subseteq CB;$
\item[$(ii)$] $\ker A\subseteq \ker B$ and there exist a Hamel basis $\{ z_\alpha\}_{\alpha\in I}$ for $\ran A$ and a linearly independent family $\{y_\alpha\}_{\alpha\in I}$ such that $y_\alpha\in BA^{-1}(z_\alpha),\ \alpha\in I$;
\item[$(iii)$] $\ker A \subseteq \ker B$ and for every Hamel basis $\{z_\alpha\}_{\alpha\in I}$ for $\ran A$ there exists a linearly independent family $\{ y_\alpha\}_{\alpha\in I}$ such that $y_\alpha\in BA^{-1}(z_\alpha),\ \alpha\in I$.
\end{itemize}
\end{corollary}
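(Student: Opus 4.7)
The plan is to derive the corollary as a direct specialization of Theorem \ref{t9} together with the structural information in Remark \ref{r10}, exploiting the fact that injectivity of $C$ is controlled precisely by the family $\{y'_\beta\}_{\beta\in J}$. The implication $(iii)\Rightarrow(ii)$ is trivial, so only $(i)\Rightarrow(iii)$ and $(ii)\Rightarrow(i)$ require argument, and the author's paragraph immediately preceding the statement already records the key observation that everything reduces to being able to take $y'_\beta=0$ for all $\beta\in J$.

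For $(i)\Rightarrow(iii)$, I would start from an arbitrary Hamel basis $\{z_\alpha\}_{\alpha\in I}$ of $\ran A$ and repeat the construction used in the proof of $(i)\Rightarrow(iii)$ in Theorem \ref{t9}: for each $\alpha$, pick $x_\alpha\in A^{-1}(z_\alpha)$, then $y_\alpha\in B(x_\alpha)$ with $C(y_\alpha)=z_\alpha$, which lies in $BA^{-1}(z_\alpha)$; linear independence of $\{y_\alpha\}_{\alpha\in I}$ follows because $C$ sends these vectors to a linearly independent family. For $\ker A\subseteq\ker B$, I would pick a Hamel basis $\{x'_\beta\}_{\beta\in J}$ of $\ker A$, and for each $\beta$ produce $y'_\beta\in B(x'_\beta)$ with $C(y'_\beta)=0$ exactly as in Theorem \ref{t9}; injectivity of $C$ then forces $y'_\beta=0$, i.e.\ $0\in B(x'_\beta)$, so $x'_\beta\in\ker B$, giving $\ker A\subseteq\ker B$.

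For $(ii)\Rightarrow(i)$, given the hypothesis choose the basis $\{z_\alpha\}_{\alpha\in I}$ of $\ran A$ and the linearly independent family $\{y_\alpha\}_{\alpha\in I}$ provided by $(ii)$. Pick any Hamel basis $\{x'_\beta\}_{\beta\in J}$ of $\ker A$; since $\ker A\subseteq\ker B$, we have $0\in B(x'_\beta)$ for every $\beta$, so the choice $y'_\beta:=0$ satisfies item $(d)$ of Theorem \ref{t9} $(ii)$. With this choice $\spc\{y'_\beta\}_{\beta\in J}=\{0\}$, so the intersection condition in Theorem \ref{t9} $(ii)$ holds trivially. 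Theorem \ref{t9} then yields an operator $C$ with $A\subseteq CB$; Remark \ref{r10} identifies $\ker C=\spc\{y'_\beta\}_{\beta\in J}=\{0\}$, so $C$ is injective, as required.

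The only potentially subtle point is verifying that in $(i)\Rightarrow(iii)$ the \emph{existence} of $\{y'_\beta\}$ with $C(y'_\beta)=0$ and $y'_\beta\in B(x'_\beta)$ is guaranteed by $A\subseteq CB$ applied to the pair $(x'_\beta,0)\in A$; this is exactly the step isolated in the proof of Theorem \ref{t9}, and its combination with injectivity is what delivers the kernel inclusion. Everything else is bookkeeping within the framework already established by Theorem \ref{t9} and Remark \ref{r10}.
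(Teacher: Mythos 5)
Your proposal is correct and follows essentially the same route as the paper: the paper derives Corollary \ref{c11} in the short paragraph preceding it, by observing (via Theorem \ref{t9} and Remark \ref{r10}) that an injective solution exists precisely when one can take $y'_\beta=0$ for all $\beta$, which is equivalent to $x'_\beta\in\ker B$ for all $\beta$, i.e.\ $\ker A\subseteq\ker B$. Your write-up merely spells out the same reduction in more detail, including the correct use of injectivity to force $y'_\beta=0$ in the direction $(i)\Rightarrow(iii)$ and the identification $\ker C=\spc\{y'_\beta\}_{\beta\in J}=\{0\}$ in the converse.
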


In view of Remark \ref{r10} and Corollary \ref{c11}, for two given linear relations $A, B\subseteq\mathfrak{X}\times\mathfrak{Z}, A\subseteq B$ if and only if 
\begin{multline*}
\ker A\subseteq\ker B \text{ and there exists a Hamel basis } \{ z_\alpha\}_{\alpha\in I} \text{ for }\ran A\\
\text{ such that }z_\alpha\in BA^{-1}(z_\alpha),\ \alpha\in I.
\end{multline*}
The last condition takes the form: 
\begin{equation*}
\text{for every }\alpha\in I \text{ there exists }x_\alpha\in\mathfrak{X}\text{ such that }(x_\alpha, z_\alpha)\in A\cap B; 
\end{equation*}
equivalently, $z_\alpha\in \ran (A\cap B),\ \alpha\in I$.

We obtain the following result which, in fact, is equivalent to the characterization given by R. Arens (Proposition \ref{p0}):
\begin{corollary}\label{c12}
Let $A$ and $B$ be two linear relations between $\mathfrak{X}$ and $\mathfrak{Z}.$ The following statements are equivalent:
\begin{itemize}
\item[$(i)$] $A\subseteq B;$
\item[$(ii)$] $\ker A\subseteq \ker B$ and $\ran(A\cap B)=\ran A;$
\item[$(iii)$] $\mul A\subseteq \mul B$ and $\dom (A\cap B)=\dom A.$
\end{itemize}
\end{corollary}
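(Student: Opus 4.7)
My plan is to prove the equivalence $(i)\Leftrightarrow (ii)$ first, and then obtain $(i)\Leftrightarrow (iii)$ by passing to the inverse relations.

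For $(i)\Rightarrow (ii)$, I would just note that $A\subseteq B$ forces $A\cap B=A$, hence $\ran(A\cap B)=\ran A$, and each $(x,0)\in A$ automatically lies in $B$, giving $\ker A\subseteq \ker B$.

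For $(ii)\Rightarrow (i)$, I would take an arbitrary $(x,z)\in A$ and construct $(x,z)$ as an element of $B$ in two pieces. Since $z\in\ran A=\ran(A\cap B)$, there exists $x_0\in\mathfrak{X}$ with $(x_0,z)\in A\cap B$; then $(x-x_0,0)=(x,z)-(x_0,z)\in A$, so $x-x_0\in\ker A\subseteq\ker B$, whence $(x-x_0,0)\in B$. Adding the elements $(x-x_0,0)$ and $(x_0,z)$ of $B$ yields $(x,z)\in B$. This is really just a repackaging of the paragraph immediately preceding the corollary, where Corollary \ref{c11} is applied with the injective operator $C$ chosen as the identity on $\mathfrak{Z}$: the family $\{y_\alpha\}_{\alpha\in I}$ then coincides with a Hamel basis $\{z_\alpha\}_{\alpha\in I}$ of $\ran A$ and the condition $z_\alpha\in BA^{-1}(z_\alpha)$ is precisely $z_\alpha\in\ran(A\cap B)$.

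For $(i)\Leftrightarrow (iii)$, I would invoke the symmetric duality $R\mapsto R^{-1}$. Using $A\subseteq B\Leftrightarrow A^{-1}\subseteq B^{-1}$, the identity $(A\cap B)^{-1}=A^{-1}\cap B^{-1}$, and the standard dictionary $\ker(A^{-1})=\mul A$, $\ran(A^{-1})=\dom A$, condition $(iii)$ for the pair $(A,B)$ is literally condition $(ii)$ for the pair $(A^{-1},B^{-1})\subseteq \mathfrak{Z}\times\mathfrak{X}$. Applying the already-established $(i)\Leftrightarrow (ii)$ to these inverses gives $(i)\Leftrightarrow (iii)$.

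I do not anticipate any real obstacle: the corollary is essentially a clean reformulation of Proposition \ref{p0} in terms of $A\cap B$, and the only point requiring care is the formal verification that inversion interchanges $\ker/\mul$ and $\dom/\ran$ and commutes with intersection of relations, both of which are immediate from the definitions.
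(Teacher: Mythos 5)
Your proof is correct. The paper itself gives no standalone proof of this corollary: it derives the equivalence $(i)\Leftrightarrow(ii)$ in the paragraph preceding the statement by specializing the general solution theory for $A\subseteq XB$ (Remark \ref{r10} and Corollary \ref{c11}), observing that forcing the operator solution to be the identity turns the condition $y_\alpha\in BA^{-1}(z_\alpha)$ into $z_\alpha\in\ran(A\cap B)$ for a Hamel basis $\{z_\alpha\}_{\alpha\in I}$ of $\ran A$; the case $(iii)$ is left implicit. You replace this with a self-contained two-line argument --- decompose $(x,z)\in A$ as $(x-x_0,0)+(x_0,z)$ with $(x_0,z)\in A\cap B$ and $x-x_0\in\ker A\subseteq\ker B$ --- which avoids Hamel bases and Theorem \ref{t9} entirely and is really the direct proof of Arens's Proposition \ref{p0} adapted to $A\cap B$. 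What the paper's route buys is the placement of the corollary as a byproduct of its factorization machinery (which is its stated purpose); what your route buys is brevity and independence from Section 4. Your handling of $(iii)$ via $R\mapsto R^{-1}$, using $(A\cap B)^{-1}=A^{-1}\cap B^{-1}$, $\ker(A^{-1})=\mul A$ and $\ran(A^{-1})=\dom A$, is exactly right. One small caveat: your parenthetical identification of the argument with ``Corollary \ref{c11} applied with $C$ the identity'' is slightly off, since Corollary \ref{c11} asserts only the existence of \emph{some} injective solution; the precise reduction in the paper goes through Remark \ref{r10}, which describes the form of \emph{all} solutions. This does not affect your proof, which stands on the direct argument.
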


The next example shows that the equality between two given linear relations is not ensured by the equality between their domains, ranges, kernels and multivalued parts: 
\begin{example}\label{e13}
Let $\mathfrak{X}$ be a linear space with the algebraic dimension at least $2$ and $x_1, x_2\in\mathfrak{X}$ be two linearly independent vectors. We define two operators $A, B: \spc\{x_1, x_2\}\subseteq\mathfrak{X}\to\mathfrak{X}$ as
\begin{equation*}
A=1_{\spc\{x_1, x_2\}}\quad\mbox{ and }\quad B(\lambda_1x_1+\lambda_2x_2):=\lambda_1x_2+\lambda_2x_1,\ \lambda_1, \lambda_2\in\mathbb{K}.
\end{equation*}
Then $\dom A=\dom B=\ran A=\ran B=\spc\{x_1, x_2\}$ and $\ker A=\ker B=\mul A=\mul B=\{0\}.$ However $A\not\subseteq B$ and $B\not\subseteq A.$
\qed
\end{example}

\section{The Problem $A\subseteq XB$ with Operator Solutions. Dimension}\label{s5}

We continue our discussion on the problem $A\subseteq XB$ with the goal to obtain other characterizations, related to the algebraic dimension, for the existence of an operator solution $X.$

\begin{lemma}\label{l14}
Let $R\subseteq \mathfrak{X}\times\mathfrak{Y}$ be a linear relation and $\mathfrak{X}_0$ a direct summand of $\ker R$ in $\dom R.$ We consider a Hamel basis $\{ x_\alpha\}_{\alpha\in I}$ for $\mathfrak{X}_0$ and a family $y_\alpha\in R(x_\alpha),\ \alpha\in I.$ Then:
\begin{itemize}
\item[$(a)$] The family $\{ y_\alpha\}_{\alpha\in I}$ is linearly independent;
\item[$(b)$] The following decomposition holds true
\begin{equation*}
\ran R=\mul R\oplus\mathfrak{Y}_0,
\end{equation*}
where $\mathfrak{Y}_0=\spc\{y_\alpha\}_{\alpha\in I}$.
\end{itemize}
\end{lemma}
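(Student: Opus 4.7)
The plan is to prove both parts by direct computation, exploiting linearity of $R$ together with the direct sum decomposition $\dom R = \ker R \oplus \mathfrak{X}_0$.

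For part $(a)$, I would suppose $\sum_{\alpha \in I_0} \lambda_\alpha y_\alpha = 0$ for some finite subset $I_0 \subseteq I$ and scalars $\lambda_\alpha \in \mathbb{K}$. Since each $(x_\alpha, y_\alpha) \in R$ and $R$ is a linear subspace, linearity gives $(\sum_{\alpha \in I_0} \lambda_\alpha x_\alpha, 0) \in R$, so $\sum_{\alpha \in I_0} \lambda_\alpha x_\alpha \in \ker R \cap \mathfrak{X}_0 = \{0\}$. Because $\{x_\alpha\}_{\alpha \in I}$ is a Hamel basis for $\mathfrak{X}_0$, all $\lambda_\alpha$ vanish.

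For part $(b)$, I would establish the two ingredients of the direct sum decomposition separately. First, to see $\mul R \cap \mathfrak{Y}_0 = \{0\}$, take $y$ in this intersection and write $y = \sum_{\alpha \in I_0} \lambda_\alpha y_\alpha$. Since $(0, y) \in R$ and $(\sum_{\alpha \in I_0} \lambda_\alpha x_\alpha, y) \in R$ by linearity, subtracting yields $(\sum_{\alpha \in I_0} \lambda_\alpha x_\alpha, 0) \in R$, so again $\sum_{\alpha \in I_0} \lambda_\alpha x_\alpha \in \ker R \cap \mathfrak{X}_0 = \{0\}$; the Hamel basis property forces $\lambda_\alpha = 0$ and hence $y = 0$. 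Next, to see $\ran R \subseteq \mul R + \mathfrak{Y}_0$, pick $y \in \ran R$ with $(x, y) \in R$ for some $x \in \dom R$, and decompose $x = k + x_0$ with $k \in \ker R$ and $x_0 = \sum_{\alpha \in I_0} \lambda_\alpha x_\alpha \in \mathfrak{X}_0$. Setting $y_0 := \sum_{\alpha \in I_0} \lambda_\alpha y_\alpha \in \mathfrak{Y}_0$, linearity gives $(x, y_0) = (k, 0) + (x_0, y_0) \in R$, hence $(0, y - y_0) \in R$, so $y - y_0 \in \mul R$ and $y = (y - y_0) + y_0 \in \mul R + \mathfrak{Y}_0$. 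The reverse inclusion $\mul R + \mathfrak{Y}_0 \subseteq \ran R$ is immediate since both summands lie in $\ran R$.

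There is no real obstacle here; the entire argument is a routine bookkeeping exercise in manipulating pairs of elements belonging to the linear subspace $R \subseteq \mathfrak{X} \times \mathfrak{Y}$. The only subtle point worth flagging is that in both $(a)$ and the first half of $(b)$, one must be careful to reconstruct the auxiliary pair $(\sum \lambda_\alpha x_\alpha, \sum \lambda_\alpha y_\alpha) \in R$ from the individual pairs $(x_\alpha, y_\alpha)$ before invoking the direct sum decomposition, and to use the Hamel basis property only at the very last step after concluding $\sum \lambda_\alpha x_\alpha = 0$.
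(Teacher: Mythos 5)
Your proof is correct and follows essentially the same route as the paper: form the pair $\bigl(\sum\lambda_\alpha x_\alpha,\sum\lambda_\alpha y_\alpha\bigr)\in R$, use $\ker R\cap\mathfrak{X}_0=\{0\}$ and the basis property for both $(a)$ and the directness in $(b)$, and decompose $x=k+x_0$ along $\dom R=\ker R\oplus\mathfrak{X}_0$ to get $y-y_0\in\mul R$ for the range decomposition. No gaps.
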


\begin{proof}
$(a)$ Let $\{ \lambda_\alpha\}_{\alpha\in I_0\subseteq I}$ be a finite subset of $\mathbb K$ such that $\sum_{\alpha\in I_0}\lambda_\alpha y_\alpha=0.$ As
\begin{equation*}
\bigl(\sum_{\alpha\in I_0} \lambda_\alpha x_\alpha, \sum_{\alpha\in I_0}\lambda_\alpha y_\alpha\bigl)=\sum_{\alpha\in I_0}\lambda_\alpha(x_\alpha, y_\alpha)\in R
\end{equation*}
it follows that $\sum_{\alpha\in I_0}\lambda_\alpha x_\alpha\in\ker R.$ But $\ker R\cap \mathfrak{X}_0=\{0\},$ so $\sum_{\alpha\in I_0}\lambda_{\alpha}x_\alpha=0$. Since the family $\{ x_\alpha\}_{\alpha\in I}$ is linearly independent we finally deduce that $\lambda_\alpha=0,\ \alpha\in I_0.$

$(b)$ Let $\{ \lambda_\alpha\}_{\alpha\in I_0\subseteq I}$ be a finite subset of $\mathbb{K}$ such that $\sum_{\alpha\in I_0} \lambda_\alpha y_\alpha\in\mul R.$ We obtain, as above, that $\lambda_\alpha=0,\ \alpha\in I_0.$ Hence the sum between $\mul R$ and $\mathfrak{Y}_0$ is also direct.

Let  $y\in\ran R$ and $x\in\dom R$ such that $(x, y)\in R.$ Since $\dom R=\ker R\oplus \mathfrak{X}_0$ there exist $x'\in\ker R$ and a finite subset $\{ \lambda_\alpha\}_{\alpha\in I_0\subseteq I}\subseteq{K}$ such that $x=x'+\sum_{\alpha\in I_0}\lambda_\alpha x_\alpha.$ Then
\begin{equation*}
\bigl(0, y-\sum_{\alpha\in I_0} \lambda_\alpha y_\alpha\bigl)=(x, y)-(x', 0)-\sum_{\alpha\in I_0}\lambda_\alpha(x_\alpha, y_\alpha)\in R.
\end{equation*}
Consequently,
\begin{equation*}
y\in\sum_{\alpha\in I_0}\lambda_\alpha y_\alpha+\mul R\subseteq \mathfrak{Y}_0\oplus \mul R.
\end{equation*}
It follows that $\ran R=\mul R\oplus\mathfrak{Y}_0$, as required.
\end{proof}

\begin{remark}\label{r15}
$(1)$ We can replace $R$ by its inverse $R^{-1}$ to obtain a converse of the previous Lemma. 

$(2)$ Let $A\subseteq \mathfrak{X}\times\mathfrak{Z}$ and $B\subseteq \mathfrak{X}\times\mathfrak{Y}$ be linear relations satisfying $\dom A\subseteq \dom B.$ Two particular cases of Lemma \ref{l14} are important in our approach:

$(a)$ $R=B|_{\ker A}$; note that, in this case, $\dom R=\ker A$, $\ran R=B(\ker A),$ $\ker R=\ker A\cap \ker B$ and $\mul R=\mul B;$

$(b)$ $R=BA^{-1}$; note that, according to formulas \eqref{eq0} and \eqref{eq0p}, $\dom R=\ran A$, $\ran R=B(\dom A)$, $\ker R=A(\dom A\cap\ker B)$ and $\mul R=B(\ker A).$
\qed
\end{remark}

\begin{corollary}\label{c16}
Let $R\subseteq \mathfrak{X}\times\mathfrak{Y}$ be a linear relation. Then
\begin{equation*}
\codim_{\dom R}(\ker R)=\codim_{\ran R}(\mul R).
\end{equation*}
\end{corollary}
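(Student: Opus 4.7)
The proof will be a direct consequence of Lemma \ref{l14}, whose hypotheses are tailor-made for exactly this dimension count. The plan is to express both codimensions via the same index set.

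First, I would invoke the standard fact that every subspace of a vector space admits an algebraic direct complement, so we can choose a subspace $\mathfrak{X}_0$ of $\dom R$ with $\dom R=\ker R\oplus\mathfrak{X}_0$. By the very definition of codimension, $\codim_{\dom R}(\ker R)=\dim\mathfrak{X}_0$. Next, fix a Hamel basis $\{x_\alpha\}_{\alpha\in I}$ for $\mathfrak{X}_0$, so $|I|=\dim\mathfrak{X}_0$. Since every $x_\alpha$ lies in $\dom R$, for each $\alpha\in I$ we may select some $y_\alpha\in R(x_\alpha)$ by the axiom of choice.

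Then Lemma \ref{l14}(a) guarantees that the family $\{y_\alpha\}_{\alpha\in I}$ is linearly independent, so the subspace $\mathfrak{Y}_0:=\spc\{y_\alpha\}_{\alpha\in I}$ satisfies $\dim\mathfrak{Y}_0=|I|$. Lemma \ref{l14}(b) yields the decomposition $\ran R=\mul R\oplus\mathfrak{Y}_0$, whence
\begin{equation*}
\codim_{\ran R}(\mul R)=\dim\mathfrak{Y}_0=|I|=\dim\mathfrak{X}_0=\codim_{\dom R}(\ker R),
\end{equation*}
which is the desired equality.

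There is no real obstacle: the genuine content sits inside Lemma \ref{l14}, and what remains is essentially bookkeeping on cardinalities together with the (purely algebraic) existence of a direct complement. If anything deserves a second glance, it is the implicit appeal to the axiom of choice both when picking the complement $\mathfrak{X}_0$ and when simultaneously selecting a representative $y_\alpha\in R(x_\alpha)$ for each $\alpha\in I$, but both uses are standard.
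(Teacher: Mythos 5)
Your proof is correct and follows exactly the route the paper intends: the corollary is stated without proof as an immediate consequence of Lemma \ref{l14}, and your argument (choose a complement $\mathfrak{X}_0$ of $\ker R$, a Hamel basis of it, representatives $y_\alpha\in R(x_\alpha)$, then read off both codimensions as $\card I$ from parts $(a)$ and $(b)$ of the lemma) is precisely the intended bookkeeping.
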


\begin{corollary}\label{c17}
Let $A\subseteq\mathfrak{X}\times\mathfrak{Z}$ and $B\subseteq\mathfrak{X}\times\mathfrak{Y}$ be (graphs of) two operators and
\begin{multline*}
\mathcal{F}:=\{\mathfrak{X}_0 : \mathfrak{X}_0 \mbox{ is a linear subspace of } \dom A\cap \dom B,\\ \dom A=\ker A\oplus \mathfrak{X}_0 \mbox{ and } \mathfrak{X}_0\cap \ker B=\{0\}\}.
\end{multline*}
The following statements are equivalent:
\begin{itemize}
\item[$(i)$] There exists an operator $C$ between $\mathfrak{Y}$ and $\mathfrak{Z}$ such that $A\subseteq CB;$
\item[$(ii)$]  There exists $\mathfrak{X}_0\in\mathcal{F}$ such that $B(\ker A)\cap B(\mathfrak{X}_0)=\{0\};$
\item[$(iii)$] $\mathcal{F}\ne\emptyset$ and for every $\mathfrak{X}_0\in\mathcal{F}$ it holds $B(\ker A)\cap B(\mathfrak{X}_0)=\{0\}.$
\end{itemize}
\end{corollary}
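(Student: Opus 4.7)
My plan is to establish the cycle $(i)\Rightarrow(iii)\Rightarrow(ii)\Rightarrow(i)$, with $(iii)\Rightarrow(ii)$ an immediate weakening. The substantive direction $(ii)\Rightarrow(i)$ will be powered by Theorem \ref{t9}, while Lemma \ref{l14} will convert the direct-sum decomposition $\dom A=\ker A\oplus\mathfrak{X}_0$ into a Hamel basis for $\ran A$.

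For $(i)\Rightarrow(iii)$, suppose $A\subseteq CB$ for some operator $C$. Since $A$, $B$, $C$ are operators, this is equivalent to $\dom A\subseteq\dom B$ together with $A(x)=C(B(x))$ for all $x\in\dom A$. To see $\mathcal{F}\neq\emptyset$, I would pick any algebraic complement $\mathfrak{X}_0$ of $\ker A$ in $\dom A$: then $\mathfrak{X}_0\subseteq\dom A\subseteq\dom B$, and any $x\in\mathfrak{X}_0\cap\ker B$ gives $A(x)=C(0)=0$, forcing $x\in\ker A\cap\mathfrak{X}_0=\{0\}$. For any $\mathfrak{X}_0\in\mathcal{F}$, if $v=B(x_1)=B(x_2)$ with $x_1\in\ker A$ and $x_2\in\mathfrak{X}_0$, then $0=A(x_1)=C(v)=A(x_2)$ makes $x_2\in\ker A\cap\mathfrak{X}_0=\{0\}$, so $v=0$ and $B(\ker A)\cap B(\mathfrak{X}_0)=\{0\}$.

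For $(ii)\Rightarrow(i)$, I would supply the data required by Theorem \ref{t9}$(ii)$. Fix Hamel bases $\{x_\alpha\}_{\alpha\in I}$ of $\mathfrak{X}_0$ and $\{x'_\beta\}_{\beta\in J}$ of $\ker A$, and set
\begin{equation*}
z_\alpha:=A(x_\alpha),\qquad y_\alpha:=B(x_\alpha),\qquad y'_\beta:=B(x'_\beta).
\end{equation*}
Lemma \ref{l14} applied to $R=A$ (with $\mul A=\{0\}$ and $\mathfrak{X}_0$ as the direct summand of $\ker A$ in $\dom A$) certifies $\{z_\alpha\}$ as a Hamel basis of $\ran A$, while the condition $\mathfrak{X}_0\cap\ker B=\{0\}$ delivers the linear independence of $\{y_\alpha\}$. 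The memberships $y_\alpha\in B(x_\alpha)\subseteq BA^{-1}(z_\alpha)$ and $y'_\beta\in B(x'_\beta)$ are automatic, and since $\spc\{y_\alpha\}_{\alpha\in I}=B(\mathfrak{X}_0)$ and $\spc\{y'_\beta\}_{\beta\in J}=B(\ker A)$, the transversality $\spc\{y_\alpha\}\cap\spc\{y'_\beta\}=\{0\}$ is precisely the hypothesis. Theorem \ref{t9}$(ii)$ then produces the required operator $C$.

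The delicate point is the construction of $y'_\beta=B(x'_\beta)$ in $(ii)\Rightarrow(i)$: this demands $\ker A\subseteq\dom B$, which is not literally forced by $\mathfrak{X}_0\in\mathcal{F}$ alone. This rests on the standing convention of Section 5, explicit in Remark \ref{r15}$(2)$, that $\dom A\subseteq\dom B$; granted this, $\ker A\subseteq\dom A\subseteq\dom B$ is automatic and the argument above is complete.
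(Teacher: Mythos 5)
Your proposal is correct and its overall skeleton coincides with the paper's: the cycle $(i)\Rightarrow(iii)\Rightarrow(ii)\Rightarrow(i)$, with Theorem \ref{t9} supplying the operator in $(ii)\Rightarrow(i)$ and Lemma \ref{l14} (applied to $R=A$, $\mul A=\{0\}$) converting $\dom A=\ker A\oplus\mathfrak{X}_0$ into a Hamel basis of $\ran A$; your part $(ii)\Rightarrow(i)$ is essentially identical to the paper's. Where you genuinely diverge is in $(i)\Rightarrow(iii)$: the paper stays inside the machinery of Theorem \ref{t9}, first invoking its condition $(ii)$ together with Lemma \ref{l14} to manufacture one particular $\mathfrak{X}_0=\spc\{x_\alpha\}_{\alpha\in I}$ lying in $\mathcal{F}$, and then, for an arbitrary $\mathfrak{X}_0\in\mathcal{F}$, translating $B(\ker A)\cap B(\mathfrak{X}_0)=\{0\}$ into the span condition of Theorem \ref{t9} $(iii)$. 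You instead exploit directly that $A$, $B$, $C$ are all operators, so that $A(x)=C(B(x))$ on $\dom A$, which gives both claims in two lines and in fact yields the slightly sharper statement that \emph{every} algebraic complement of $\ker A$ in $\dom A$ belongs to $\mathcal{F}$. Your elementary route is cleaner for this corollary but is special to the single-valued setting, whereas the paper's detour through Theorem \ref{t9} is the template it reuses for the multivalued Theorem \ref{t21}. Finally, your observation that $(ii)\Rightarrow(i)$ silently needs $\ker A\subseteq\dom B$ (so that $y'_\beta=B(x'_\beta)$ exists) is a legitimate catch: the paper's own proof makes the same tacit appeal to $\dom A\subseteq\dom B$ as in Remark \ref{r15} $(2)$, and your explicit acknowledgment of this hypothesis is the honest way to close the argument.
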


\begin{proof}
The implication $(iii)\Rightarrow(ii)$ is obvious.

$(i)\Rightarrow(iii).$

$(a)$ Let $\{z_\alpha\}_{\alpha\in I}$ be a Hamel basis for $\ran A$ and $\{y_\alpha\}_{\alpha\in I}$ a linearly independent family such that $y_\alpha\in BA^{-1}(z_\alpha),\ \alpha\in I$ (the existence is ensured by Theorem \ref{t9} $(ii)$). According to Lemma \ref{l14} any family $\{ x_\alpha\}_{\alpha\in I}$ such that $x_\alpha\in A^{-1}(z_\alpha)\cap B^{-1}(y_\alpha),\ \alpha\in I$ is linearly independent and $\dom A=\ker A\oplus\mathfrak{X}_0,$ where $\mathfrak{X}_0:=\spc\{x_\alpha\}_{\alpha\in I}.$ For a given finite set $\{\lambda_{\alpha}\}_{\alpha\in I_0\subseteq I}\subseteq\mathbb{K},$
\begin{equation*}
\sum_{\alpha\in I_0}\lambda_\alpha x_\alpha\in\ker B\quad\text{if and only if}\quad\sum_{\alpha\in I_0}\lambda_\alpha y_\alpha\in\mul B=\{0\}.
\end{equation*}
The family $\{y_\alpha\}_{\alpha\in I}$ is linearly independent, so $\lambda_{\alpha}=0,\ \alpha\in I_0.$ It follows that $\mathfrak{X}_0\cap\ker B=\{0\}$. Consequently $\mathfrak{X}_0\in\mathcal{F}$.

$(b)$ Let $\mathfrak{X}_0\in\mathcal{F}$ and $\{x_\alpha\}_{\alpha\in I}$ a Hamel basis for $\mathfrak{X}_0.$ For each $\alpha\in I$ we define $z_\alpha=A(x_\alpha)$ and $y_\alpha=B(x_\alpha).$ It is easy to observe that, by Lemma \ref{l14}, $\{z_\alpha\}_{\alpha\in I}$ is a Hamel basis for $\ran A.$ In addition, the family $\{y_\alpha\}_{\alpha\in I}$ is linearly independent: if $\sum_{\alpha\in I_0}\lambda_\alpha y_\alpha=0$ for a certain finite set $\{\lambda_\alpha\}_{\alpha\in I_0\subseteq I}\subseteq\mathbb{K}$ then $\sum_{\alpha\in I_0}\lambda_\alpha x_\alpha\in \ker B\cap \mathfrak{X}_0=\{0\};$ since the family $\{ x_\alpha\}_{\alpha\in I}$ is linearly independent we deduce that $\lambda_\alpha=0,\ \alpha\in I_0.$

Let $\{x'_\beta\}_{\beta\in J}$ be a Hamel basis for $\ker A$ and $y'_\beta=B(x'_\beta),\ \beta\in J.$ We remark that
\begin{equation*}
\spc\{y_\alpha\}_{\alpha\in I}\cap \spc\{y'_\beta\}_{\beta\in J}=\{0\}\quad\text{if and only if}\quad B(\mathfrak{X}_0)\cap B(\ker A)=\{0\}. 
\end{equation*}
Indeed, if $\{\lambda_\alpha\}_{\alpha\in I_0\subseteq I}$ and $\{\lambda'_\beta\}_{\beta\in J_0\subseteq J}$ are finite subsets of $\mathbb{K}$ then 
\begin{equation*}
y=\sum_{\alpha\in I_0}\lambda_\alpha y_\alpha=\sum_{\beta\in J_0}\lambda'_\beta y'_\beta\in \spc \{ y_\alpha\}_{\alpha\in I}\cap \spc\{y'_\beta\}_{\beta\in J_0}.
\end{equation*}
Equivalently,
\begin{equation*}
y=B\bigl(\sum_{\alpha\in I_0}\lambda_\alpha x_\alpha\bigr)=B\bigl(\sum_{\beta\in J_0}\lambda'_\beta x'_\beta\bigr)\in B(\mathfrak{X}_0)\cap B(\ker A).
\end{equation*}
The conclusion follows by Theorem \ref{t9} $(iii)$.

$(ii)\Rightarrow(i).$ We can proceed as in the proof of the previous implication (part $(b)$) in order to build families $\{ z_\alpha\}_{\alpha\in I}, \{ y_\alpha\}_{\alpha\in I}, \{ x'_\beta\}_{\beta\in J}$ and $\{ y'_\beta\}_{\beta\in J}$ having the properties of Theorem \ref{t9} $(ii)$. It follows, by Theorem \ref{t9} $(i)$, that there exists an operator $C$ between $\mathfrak{Y}$ and $\mathfrak{Z}$ such that $A\subseteq CB.$
\end{proof}

\begin{corollary}\label{c18}
Let $R\subseteq \mathfrak{X}\times\mathfrak{Y}$ be a linear relation. The following statements are equivalent:
\begin{itemize}
\item[$(i)$] There exist a Hamel basis $\{ x_\alpha\}_{\alpha\in I}$ for $\dom R$ and a linearly independent family $\{ y_\alpha\}_{\alpha\in I}$ with $y_\alpha\in R(x_\alpha),\ \alpha\in I;$
\item[$(ii)$] For every Hamel basis $\{x_\alpha\}_{\alpha\in I}$ for $\dom R$ there exists a linearly independent family $\{ y_\alpha\}_{\alpha\in I}$ with $y_\alpha\in R(x_\alpha),\ \alpha\in I.$
\item[$(iii)$] $\dim (\ker R)\le \dim (\mul R).$
\end{itemize}
\end{corollary}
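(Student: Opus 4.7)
The plan is to establish the cycle $(ii) \Rightarrow (i) \Rightarrow (iii) \Rightarrow (ii)$. The first implication is immediate since every linear space admits a Hamel basis. For $(i) \Rightarrow (iii)$, I would fix a Hamel basis $\{x'_\beta\}_{\beta \in J}$ of $\ker R$ and expand each $x'_\beta = \sum_\alpha c_{\beta, \alpha} x_\alpha$ in the given basis from $(i)$; setting $m_\beta := \sum_\alpha c_{\beta, \alpha} y_\alpha$, the corresponding linear combination of pairs $(x_\alpha, y_\alpha) \in R$ gives $(x'_\beta, m_\beta) \in R$, and since $x'_\beta \in \ker R$ one has $m_\beta \in \mul R$. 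Any relation $\sum_\beta \lambda_\beta m_\beta = 0$ unfolds, via linear independence of $\{y_\alpha\}$, to $\sum_\beta \lambda_\beta c_{\beta, \alpha} = 0$ for every $\alpha$, which recombines into $\sum_\beta \lambda_\beta x'_\beta = 0$ and hence forces $\lambda_\beta = 0$. Thus $\{m_\beta\}_{\beta \in J}$ is linearly independent in $\mul R$, giving $\dim(\ker R) \le \dim(\mul R)$.

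The substantive step is $(iii) \Rightarrow (ii)$. Given an arbitrary Hamel basis $\{x_\alpha\}_{\alpha \in I}$ of $\dom R$ and the canonical projection $\pi : \dom R \to \dom R / \ker R$, I would invoke Zorn's lemma to extract a maximal $J \subseteq I$ for which $\{\pi(x_\alpha)\}_{\alpha \in J}$ is linearly independent — necessarily a Hamel basis of the quotient. A quick verification then shows that $\mathfrak{X}_0 := \spc\{x_\alpha\}_{\alpha \in J}$ is a direct complement of $\ker R$ in $\dom R$, so Lemma \ref{l14} produces, for any choice $y_\beta \in R(x_\beta)$ with $\beta \in J$, a linearly independent family satisfying $\ran R = \mul R \oplus \spc\{y_\beta\}_{\beta \in J}$. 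For each $\alpha \in I \setminus J$ I would decompose $x_\alpha = \sum_\beta c_{\alpha, \beta} x_\beta + k_\alpha$ with $k_\alpha \in \ker R$, and then define $y_\alpha := \sum_\beta c_{\alpha, \beta} y_\beta + m_\alpha$ for an $m_\alpha \in \mul R$ yet to be chosen; this $y_\alpha$ automatically lies in $R(x_\alpha)$ because $(k_\alpha, m_\alpha) \in R$.

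The linchpin — and the main obstacle — is that $\{k_\alpha\}_{\alpha \in I \setminus J}$ is in fact a Hamel basis of $\ker R$: spanning follows by projecting the basis expansion of a general $k \in \ker R$ onto $\dom R = \mathfrak{X}_0 \oplus \ker R$, while linear independence is immediate because $\sum \mu_\alpha k_\alpha = 0$ rewrites, via $k_\alpha = x_\alpha - \sum_\beta c_{\alpha, \beta} x_\beta$, as a dependence in $\{x_\alpha\}_{\alpha \in I}$ whose coefficient of $x_\alpha$ (for $\alpha \in I \setminus J$) is precisely $\mu_\alpha$. This identification $|I \setminus J| = \dim(\ker R)$ is what converts the abstract cardinal hypothesis (iii) into the concrete bound needed: I may then pick $\{m_\alpha\}_{\alpha \in I \setminus J}$ linearly independent in $\mul R$. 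Linear independence of the complete family $\{y_\alpha\}_{\alpha \in I}$ finally falls out by splitting any relation through $\ran R = \spc\{y_\beta\}_{\beta \in J} \oplus \mul R$ and applying, in turn, linear independence of $\{y_\beta\}_{\beta \in J}$ and of $\{m_\alpha\}_{\alpha \in I \setminus J}$.
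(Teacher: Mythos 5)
Your proof is correct, but it is organized around a different cycle of implications than the paper's, and the hard work lands in a different place. The paper proves $(i)\Rightarrow(ii)\Rightarrow(iii)\Rightarrow(i)$: the arbitrary-basis statement $(ii)$ is deduced from $(i)$ by re-expanding one basis in the other and transporting the coefficients to the $y$'s, and then the cardinality condition $(iii)$ only has to produce the \emph{existence} statement $(i)$, which is easy because one is free to choose a Hamel basis of $\dom R$ adapted to a decomposition $\dom R=\ker R\oplus\mathfrak{X}_0$ and then invoke Lemma \ref{l14}. You instead prove $(ii)\Rightarrow(i)\Rightarrow(iii)\Rightarrow(ii)$, so your implication $(iii)\Rightarrow(ii)$ must handle an \emph{arbitrary} basis $\{x_\alpha\}_{\alpha\in I}$ directly. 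You do this by extracting (via Zorn) a subfamily indexed by $J\subseteq I$ whose span complements $\ker R$, and the genuinely new ingredient relative to the paper is your observation that the kernel components $k_\alpha$ of the remaining basis vectors $x_\alpha$, $\alpha\in I\setminus J$, form a Hamel basis of $\ker R$ --- this is exactly what converts the cardinal inequality $\dim(\ker R)\le\dim(\mul R)$ into a choice of linearly independent $m_\alpha\in\mul R$ attached to the right indices. Both arguments rest on Lemma \ref{l14} for the direct-sum decomposition $\ran R=\mul R\oplus\spc\{y_\beta\}_{\beta\in J}$ and use it in the same way to finish the linear-independence check. Your route is self-contained and slightly more demanding in its main step; the paper's is more economical because it reserves the change-of-basis bookkeeping for the implication $(i)\Rightarrow(ii)$, where no cardinality argument is needed. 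Your direct $(i)\Rightarrow(iii)$ is also a sound compression of the paper's two steps $(i)\Rightarrow(ii)\Rightarrow(iii)$ into one.
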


\begin{proof}
$(i)\Rightarrow(ii).$ Let $\{ x'_\beta\}_{\beta\in J}$ and $\{ y'_\beta\}_{\beta\in J}$ be families with the properties of $(i)$. Then, for every given Hamel basis $\{ x_\alpha\}_{\alpha\in I}$ for $\dom R$ and every fixed $\alpha\in I,$ there exists a finite set $\{ \lambda_{\alpha\beta}\}_{\beta\in J(\alpha)\subseteq J}$ such that
\begin{equation*}
x_\alpha=\sum_{\beta\in J(\alpha)}\lambda_{\alpha\beta}x'_\beta.
\end{equation*}
We observe that
\begin{equation*}
(x_\alpha, y_\alpha)=\sum_{\beta\in J(\alpha)}\lambda_{\alpha\beta}(x'_\beta, y'_\beta)\in R,
\end{equation*}
where $y_\alpha:=\sum_{\beta\in J(\alpha)}\lambda_{\alpha\beta} y'_\beta.$ It remains to show that the family $\{ y_\alpha\}_{\alpha\in I}$ is linearly independent. To this aim let $\{ \lambda_\alpha\}_{\alpha\in I_0\subseteq I}\subseteq \mathbb{K}$ be a finite set with the property that 
\begin{equation}\label{eq6}
\sum_{\alpha\in I_0}\lambda_\alpha y_\alpha=\sum_{\alpha\in I_0}\sum_{\beta\in J(\alpha)} \lambda_\alpha\lambda_{\alpha\beta} y'_\beta=0.
\end{equation}
With the notations $\lambda_{\alpha\beta}=0$ for $\beta\in J\setminus J(\alpha)$ and $\alpha\in I$, the formula \eqref{eq6} can be rewritten as
\begin{equation*}
\sum_{\beta\in J}\Bigl(\sum_{\alpha\in I_0}\lambda_{\alpha}\lambda_{\alpha\beta}\Bigr)y'_\beta=0.
\end{equation*}
Equivalently, due to the fact that the family $\{ y'_\beta\}_{\beta\in J}$ is linearly independent, 
\begin{equation*}
\sum_{\alpha\in I_0}\lambda_\alpha\lambda_{\alpha\beta}=0,\quad \beta\in J.
\end{equation*}
We deduce that
\begin{eqnarray*}
0&=&\sum_{\beta\in J}\bigl(\sum_{\alpha\in I_0}\lambda_\alpha\lambda_{\alpha\beta}\bigl)x'_\beta\\
&=&\sum_{\alpha\in I_0}\lambda_\alpha\sum_{\beta\in J(\alpha)}\lambda_{\alpha\beta}x'_\beta\\
&=&\sum_{\alpha\in I_0}\lambda_\alpha x_\alpha.
\end{eqnarray*}
Since the family $\{ x_\alpha\}_{\alpha\in I}$ is linearly independent it follows that $\lambda_\alpha=0,\ \alpha\in I_0,$ as required.

$(ii)\Rightarrow(iii).$ Let $\{x_\alpha\}_{\alpha\in I_0}$ be a Hamel basis for $\ker R$ and $\{ x_\alpha\}_{\alpha\in I\supseteq I_0}$ its completion to a Hamel basis for $\dom R.$ By $(ii)$ there exists a linearly independent family $\{y_\alpha\}_{\alpha\in I}$ such that $y_\alpha\in R(x_\alpha),\ \alpha\in I.$ Note that the family $\{ y_\alpha\}_{\alpha\in I_0}$ is contained in $\mul R$. Hence $\dim(\mul R)\ge \card I_0=\dim (\ker R).$

$(iii)\Rightarrow(i).$ Let $\mathfrak{X}_0$ be a direct summand of $\ker R$ in $\dom R, \{x_\alpha\}_{\alpha\in I_0}$ a Hamel basis for $\ker R$ and $\{x_\alpha\}_{\alpha\in I\setminus I_0}$ a Hamel basis for $\mathfrak{X}_0.$ Then $\{ x_\alpha\}_{\alpha\in I}$ is a Hamel basis for $\dom R.$ We define, in view of $(iii)$, a linearly independent family $\{ y_\alpha\}_{\alpha\in I_0}$ in $\mul R.$ If, for $\alpha\in I\setminus I_0, y_\alpha\in R(x_\alpha)$ then, according to Lemma \ref{l14}, the family $\{ y_\alpha\}_{\alpha\in I\setminus I_0}$ is linearly independent and $\ran R=\mul R\oplus \spc\{y_\alpha\}_{\alpha\in I\setminus I_0}.$ It follows that $\{ y_\alpha\}_{\alpha\in I}$ is also linearly independent. The proof is complete.
\end{proof}

\begin{remark}\label{r19}
Let $A\subseteq \mathfrak{X}\times\mathfrak{Z}$ and $B\subseteq\mathfrak{X}\times\mathfrak{Y}$ be two linear relations satisfying $\dom A\subseteq \dom B.$ We can specialize Corollary \ref{c18} for the case $R=BA^{-1}:$

\textit{
The following statements are equivalent:
\begin{itemize}
\item[$(i)$] There exist a Hamel basis $\{ z_\alpha\}_{\alpha\in I}$ for $\ran A$ and a linearly independent family $\{y_\alpha\}_{\alpha\in I}$ such that $y_\alpha\in BA^{-1}(z_\alpha),\ \alpha \in I;$
\item[$(ii)$] For every Hamel basis $\{ z_\alpha\}_{\alpha\in I}$ for $\ran A$ there exists a linearly independent family $\{ y_\alpha\}_{\alpha\in I}$ such that $y_\alpha\in BA^{-1}(z_\alpha),\ \alpha\in I;$
\item[$(iii)$] $\dim[A(\dom A\cap \ker B)]\le \dim [B(\ker A)].$
\end{itemize}
}\qed
\end{remark}

Combining Corollary \ref{c11} with Remark \ref{r19} we obtain another necessary and sufficient condition for the existence of an injective operator as a solution to the problem $A\subseteq XB:$
\begin{corollary}\label{c20}
Let $A\subseteq\mathfrak{X}\times\mathfrak{Z}$ and $B\subseteq\mathfrak{X}\times\mathfrak{Y}$ be two linear relations. The following statements are equivalent:
\begin{itemize}
\item[$(i)$] There exists an injective operator $C$ between $\mathfrak{Y}$ and $\mathfrak{Z}$ such that $A\subseteq CB;$
\item[$(ii)$] 
\begin{equation*}
\dom A\subseteq \dom B,\qquad \ker A\subseteq \ker B
\end{equation*}
and 
\begin{equation*}
\dim[A(\dom A\cap \ker B)]\le\dim[B(\ker A)].
\end{equation*}
\end{itemize}
\end{corollary}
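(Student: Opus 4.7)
The proof is essentially a recombination of Corollary \ref{c11} and Remark \ref{r19}; the only point requiring care is the hypothesis $\dom A\subseteq\dom B$ which underlies Remark \ref{r19}. The plan is therefore to verify first that this inclusion is automatic under $(i)$, and then to chain the two equivalences.

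\emph{Step 1: $(i)$ implies $\dom A\subseteq\dom B$.} Suppose an injective operator $C$ with $A\subseteq CB$ exists, and let $x\in\dom A$. Pick $z\in A(x)\subseteq CB(x)$. By the definition of the product relation there is some $y\in\mathfrak{Y}$ with $(x,y)\in B$ and $(y,z)\in C$, which forces $x\in\dom B$.

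\emph{Step 2: $(i)\Rightarrow(ii)$.} By Corollary \ref{c11}, the existence of an injective operator solution $C$ already yields $\ker A\subseteq\ker B$ together with a Hamel basis $\{z_\alpha\}_{\alpha\in I}$ for $\ran A$ and a linearly independent family $\{y_\alpha\}_{\alpha\in I}$ with $y_\alpha\in BA^{-1}(z_\alpha)$. Since Step 1 supplies $\dom A\subseteq\dom B$, we are in the situation where Remark \ref{r19} applies. The existence of such families is precisely condition $(i)$ of that remark, hence equivalent to $\dim[A(\dom A\cap\ker B)]\le\dim[B(\ker A)]$, giving $(ii)$.

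\emph{Step 3: $(ii)\Rightarrow(i)$.} Conversely, assume $\dom A\subseteq\dom B$, $\ker A\subseteq\ker B$, and the dimension inequality. Applying Remark \ref{r19} in the direction $(iii)\Rightarrow(i)$ produces a Hamel basis $\{z_\alpha\}_{\alpha\in I}$ for $\ran A$ and a linearly independent family $\{y_\alpha\}_{\alpha\in I}$ with $y_\alpha\in BA^{-1}(z_\alpha)$. Together with $\ker A\subseteq\ker B$ this is exactly condition $(ii)$ of Corollary \ref{c11}, so an injective operator $C$ between $\mathfrak{Y}$ and $\mathfrak{Z}$ with $A\subseteq CB$ exists.

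The main (and essentially only) obstacle is noticing that $(i)$ alone already entails $\dom A\subseteq\dom B$, which legitimises the use of Remark \ref{r19}; otherwise the proof is purely a matter of cross-referencing the earlier characterizations.
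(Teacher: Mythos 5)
Your proof is correct and follows exactly the route the paper takes: Corollary \ref{c20} is obtained there by combining Corollary \ref{c11} with Remark \ref{r19}, and your Step 1 (that $(i)$ already forces $\dom A\subseteq\dom B$, which also follows from Corollary \ref{c2}) supplies the only detail the paper leaves implicit.
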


The main result of this section characterize the existence of an operator solution for the problem $A\subseteq XB$ in terms of the algebraic dimensions of the multivalued part of $B$, respectively the kernel of $BA^{-1}:$
\begin{theorem}\label{t21}
Let $A\subseteq \mathfrak{X}\times\mathfrak{Z}$ and $B\subseteq\mathfrak{X}\times\mathfrak{Y}$ be two linear relations. The following statements are equivalent:
\begin{itemize}
\item[$(i)$] There exists an operator  $C$ between $\mathfrak{Y}$ and $\mathfrak{Z}$ such that $A\subseteq CB;$
\item[$(ii)$] $\dom A\subseteq \dom B$ and $\dim(\mul B)\ge \dim[A(\dom A\cap \ker B)]$.
\end{itemize}
\end{theorem}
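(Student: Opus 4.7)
The plan is to establish $(i)\Rightarrow(ii)$ directly and to deduce $(ii)\Rightarrow(i)$ by constructing the four families required by Theorem \ref{t9}(ii) using two applications of Lemma \ref{l14}.

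For $(i)\Rightarrow(ii)$: given $A\subseteq CB$ with $C$ an operator, $\dom A\subseteq\dom(CB)\subseteq\dom B$ by \eqref{eq0}. For the dimension bound, any $(x,z)\in A$ with $x\in\ker B$ yields, through the product, some $y$ with $(x,y)\in B$ and $z=C(y)$; subtracting $(x,0)\in B$ forces $y\in\mul B$, so $A(\dom A\cap\ker B)\subseteq C(\mul B)$. Since $C$ is single-valued, the right-hand side has dimension at most $\dim(\mul B)$.

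For $(ii)\Rightarrow(i)$, I would build the families as follows. On the kernel side, fix a direct summand $\mathfrak{X}'_0$ of $\ker A\cap\ker B$ in $\ker A$, a Hamel basis $\{x'_\beta\}_{\beta\in J'}$ of $\mathfrak{X}'_0$, and pick $y'_\beta\in B(x'_\beta)$; Lemma \ref{l14} applied to $R=B|_{\ker A}$ gives $B(\ker A)=\mul B\oplus\spc\{y'_\beta\}_{\beta\in J'}$. Complete $\{x'_\beta\}$ to a Hamel basis of $\ker A$ via a Hamel basis of $\ker A\cap\ker B$ indexed by $J\setminus J'$, and set $y'_\beta:=0\in\mul B$ there, so that $\spc\{y'_\beta\}_{\beta\in J}=\spc\{y'_\beta\}_{\beta\in J'}$. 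On the range side, pick a direct summand $\mathfrak{Z}_0$ of $A(\dom A\cap\ker B)$ in $\ran A$ with Hamel basis $\{z_\alpha\}_{\alpha\in I\setminus I_0}$ and a Hamel basis $\{z_\alpha\}_{\alpha\in I_0}$ of $A(\dom A\cap\ker B)$; for $\alpha\in I\setminus I_0$ select any $y_\alpha\in BA^{-1}(z_\alpha)$ (nonempty since $\dom A\subseteq\dom B$), and Lemma \ref{l14} applied to $R=BA^{-1}$ yields $B(\dom A)=B(\ker A)\oplus\spc\{y_\alpha\}_{\alpha\in I\setminus I_0}$. For $\alpha\in I_0$ pick $x_\alpha\in\dom A\cap\ker B$ with $(x_\alpha,z_\alpha)\in A$; then $B(x_\alpha)=\mul B\subseteq BA^{-1}(z_\alpha)$, and the dimension hypothesis allows me to pick $\{y_\alpha\}_{\alpha\in I_0}$ linearly independent inside $\mul B$.

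The main obstacle is the combinatorial check that the assembled families satisfy Theorem \ref{t9}(ii). The crux is that any element of $\spc\{y_\alpha\}_{\alpha\in I}$ splits as a sum of a piece in $\spc\{y_\alpha\}_{\alpha\in I\setminus I_0}$ and a piece in $\mul B\subseteq B(\ker A)$, and the direct sum $B(\dom A)=B(\ker A)\oplus\spc\{y_\alpha\}_{\alpha\in I\setminus I_0}$ separates these cleanly. This immediately gives linear independence of the full $\{y_\alpha\}_{\alpha\in I}$, and it forces any $v\in\spc\{y_\alpha\}_{\alpha\in I}\cap\spc\{y'_\beta\}_{\beta\in J}$ into $\mul B\cap\spc\{y'_\beta\}_{\beta\in J'}=\{0\}$ by the other direct sum. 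Theorem \ref{t9} then produces the desired operator $C$.
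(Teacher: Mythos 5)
Your argument for $(ii)\Rightarrow(i)$ is essentially the construction in the paper: the same two direct summands ($\mathfrak{Z}_0$ of $A(\dom A\cap\ker B)$ in $\ran A$ and a complement of $\ker A\cap\ker B$ in $\ker A$), the same two applications of Lemma \ref{l14} to $B|_{\ker A}$ and to $BA^{-1}$, the choice of a linearly independent family in $\mul B$ indexed by $I_0$, and the same separation argument via $B(\dom A)=B(\ker A)\oplus\spc\{y_\alpha\}_{\alpha\in I\setminus I_0}$ and $B(\ker A)=\mul B\oplus\spc\{y'_\beta\}$, feeding into Theorem \ref{t9}$(ii)$. Where you genuinely diverge is in $(i)\Rightarrow(ii)$: the paper again routes through Theorem \ref{t9} (implication $(i)\Rightarrow(iii)$), chooses adapted Hamel bases, and completes the linearly independent family $\{y_\alpha,\ \alpha\in I_0;\ y'_\beta,\ \beta\in J\setminus J_0\}$ to a basis of $B(\ker A)$ in order to count $\dim(\mul B)=\card I_0+\card K$. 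Your argument instead shows directly that $A(\dom A\cap\ker B)\subseteq C(\mul B)$ --- from $(x,z)\in A\subseteq CB$ with $x\in\ker B$ one gets $y$ with $(x,y)\in B$, $z=C(y)$, and $(0,y)=(x,y)-(x,0)\in B$ --- and then invokes the fact that a single-valued linear map cannot increase dimension. This is shorter, avoids all basis bookkeeping in that direction, and makes transparent exactly where single-valuedness of $C$ is used (for a genuine relation $C$ the bound would fail because of $\mul C$); what it does not give you, and what the paper's longer computation does, is the finer identity $\dim(\mul B)=\dim[A(\dom A\cap\ker B)]+\card K$ exhibiting the defect explicitly. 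Both arguments are correct.
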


\begin{proof}
$(i)\Rightarrow(ii).$ Let $\mathfrak{Z}_0$ be a direct summand of $A(\dom A\cap \ker B)$ in $\ran A$ and $\mathfrak{X}_0$ a direct summand of $\ker A\cap \ker B$ in $\ker A.$ We consider a Hamel basis $\{z_\alpha\}_{\alpha\in I}$ for $\ran A$ such that $\{z_\alpha\}_{\alpha\in I_0\subseteq I}$ is a Hamel basis for $A(\dom A\cap \ker B)$ and $\{z_\alpha\}_{\alpha\in I\setminus I_0}$ is a Hamel basis for $\mathfrak{Z}_0.$ Similarly, let $\{x'_\beta\}_{\beta\in J}$ be a Hamel basis for $\ker A$ such that $\{x'_\beta\}_{\beta\in J_0\subseteq J}$ is a Hamel basis for $\ker A\cap \ker B$, while $\{ x'_\beta\}_{\beta\in J\setminus J_0}$ is a Hamel basis for $\mathfrak{X}_0.$ According to Theorem \ref{t9} (implication $(i)\Rightarrow(iii)$) there exist families $\{y_\alpha\}_{\alpha\in I}, \{y'_\beta\}_{\beta\in J}\subseteq\mathfrak{Y}$ such that $\{ y_\alpha\}_{\alpha\in I}$ is linearly independent, $y_\alpha\in BA^{-1}(z_\alpha)$ for $\alpha\in I$, $y'_\beta\in B(x'_\beta)$ for $\beta\in J$ and
\begin{equation}\label{eq7}
\spc\{y_\alpha\}_{\alpha\in I}\cap \spc\{y'_\beta\}_{\beta\in J}=\{0\}.
\end{equation} 
Let us note that, by Remark \ref{r15} $(2a)$, the family $\{y'_\beta\}_{\beta\in J\setminus J_0}$ is linearly independent and
\begin{equation}\label{eq8}
\mul B\oplus \spc\{y'_\beta\}_{\beta\in J\setminus J_0}=B(\ker A).
\end{equation}
Then, in view of \eqref{eq7}, the family 
\begin{equation*}
\mathcal{B}_0:=\{ y_\alpha,\ \alpha\in I_0;\ y'_\beta,\ \beta\in J\setminus J_0\}
\end{equation*}
is also linearly independent in $B(\ker A).$ We add the vectors $\{ y''_\gamma\}_{\gamma\in K}$ in order to complete $\mathcal{B}_0$ to a Hamel basis of $B(\ker A).$ Hence
\begin{equation}\label{eq9}
\spc\{y'_\beta\}_{\beta\in J\setminus J_0}\oplus \spc\{y_\alpha,\ \alpha\in I_0;\ y''_\gamma,\ \gamma\in K\}=B(\ker A).
\end{equation}
Following \eqref{eq8} and \eqref{eq9} we obtain that
\begin{eqnarray*}
\dim(\mul B)&=&\dim(\spc\{y_\alpha,\ \alpha\in I_0;\ y''_\gamma,\ \gamma\in K\})\\
& &\qquad\qquad\qquad\qquad(=\codim_{B(\ker A)}\spc\{y'_\beta\}_{\beta\in J\setminus J_0})\\
&=&\card I_0+\card K\\
&=&\dim [A(\dom A\cap \ker B)]+\card K\\
&\ge& \dim[A(\dom A\cap \ker B)].
\end{eqnarray*}

$(ii)\Rightarrow(i).$ Let $\mathfrak{Z}_0$ and $\mathfrak{X}_0$ be linear subspaces of $\ran A$ and, respectively $\ker A$ such that
\begin{equation*}
A(\dom A\cap \ker B)\oplus \mathfrak{Z}_0=\ran A\quad\mbox{and}\quad
(\ker A\cap \ker B)\oplus \mathfrak{X}_0=\ker A.
\end{equation*}

Consider also the Hamel bases $\{ z_\alpha\}_{\alpha\in I_0}$ for $A(\dom A\cap \ker B)$, $\{z_\alpha\}_{\alpha\in I\setminus I_0}$ for $\mathfrak{Z}_0$, $\{x'_\beta\}_{\beta\in J_0}$ for $\ker A\cap \ker B$ and $\{x'_\beta\}_{\beta\in J\setminus J_0}$ for $\mathfrak{X}_0.$ As $\dim(\mul B)\ge\dim [A(\dom A\cap \ker B)]$ there exists a linearly independent family $\{y_\alpha\}_{\alpha\in I_0}\subseteq \mul B.$ For each $\alpha\in I\setminus I_0$ let $y_\alpha\in BA^{-1}(z_\alpha).$ By Remark \ref{r15} $(2b)$ the family $\{y_\alpha\}_{\alpha\in I\setminus I_0}$ is linearly independent and
\begin{equation*}
B(\ker A)\oplus \mathfrak{Y}_0=B(\dom A),
\end{equation*}
where $\mathfrak{Y}_0=\spc\{y_\alpha\}_{\alpha\in I\setminus I_0}.$ Since $\mul B\subseteq B(\ker A)$ and $B(\ker A)\cap \mathfrak{Y}_0=\{0\}$ it follows that the family $\{ y_\alpha\}_{\alpha\in I}$ is also linearly independent. Moreover, $y_\alpha\in BA^{-1}(z_\alpha)$ for every $\alpha\in I.$ Let $y'_\beta\in B(x'_\beta),\ \beta\in J$ such that, for $\beta\in J_0, y'_\beta=0.$

We claim that
\begin{equation*}
\spc\{y_\alpha\}_{\alpha\in I}\cap \spc\{y'_\beta\}_{\beta\in J}=\{0\}.
\end{equation*}
To this aim, let $\{\lambda_\alpha\}_{\alpha\in I_1\subseteq I}$ and $\{ \lambda'_\beta\}_{\beta\in J_1\subseteq J\setminus J_0}$ be finite subsets of $\mathbb{K}$ such that 
\begin{equation*}
\sum_{\alpha\in I_0\cap I_1}\lambda_\alpha y_\alpha+\sum_{\alpha\in I_1\setminus I_0}\lambda_\alpha y_\alpha=\sum_{\beta\in J_1}\lambda'_\beta y'_\beta.
\end{equation*}
We deduce that
\begin{equation*}
\sum_{\alpha\in I_1\setminus I_0}\lambda_\alpha y_\alpha\in B(\ker A)\cap{\mathfrak{Y}_0}=\{0\},
\end{equation*}
so $\lambda_\alpha=0,\ \alpha\in I_1\setminus I_0.$ Consequently,
\begin{equation*}
\sum_{\beta\in J_1}\lambda'_\beta y'_\beta=\sum_{\alpha\in I_0\cap I_1}\lambda_\alpha y_\alpha\in \mul B.
\end{equation*}
Equivalently,
\begin{equation*}
\sum_{\beta\in J_1}\lambda'_\beta x'_\beta\in\ker B\cap\ker A\cap\mathfrak{X}_0=\{0\}.
\end{equation*}
Thus, $\lambda'_\beta=0,\ \beta\in J$ which proves our claim. $(i)$ follows immediately by Theorem \ref{t9} (implication $(ii)\Rightarrow(i)$).
\end{proof}

Condition $(ii)$ of Theorem \ref{t21} is fulfilled if, in particular, $A(\dom A\cap \ker B)=\{ 0\}$ or, in equivalent form, $A$ is (the graph of) an operator and $\dom A\cap \ker B\subseteq \ker A.$ Indeed, if $x\in\ker B\cap\dom A$ then, for a certain $z\in\mathfrak{Z}, (x, z)\in A.$ It follows that $z\in A (\dom A\cap \ker B)=\{ 0\},$ so $x\in \ker A.$ Moreover, 
\begin{equation*}
\mul A\subseteq A(\dom A\cap \ker B)=\{0\}.
\end{equation*}
Conversely, let $z\in A(\dom A\cap \ker B)$ and $x\in \ker B\cap \dom A\subseteq \ker A$ such that $(x, z)\in A.$ It follows that $z\in \mul A=\{ 0 \},$ that is $A(\dom A\cap \ker B)=\{ 0 \}.$ We deduce that:
\begin{corollary}\label{c22}
Let $A\subseteq\mathfrak{X}\times\mathfrak{Z}$ be (the graph of) an operator and $B\subseteq\mathfrak{X}\times\mathfrak{Y}$ a linear relation such that
\begin{equation*}
\dom A\subseteq \dom B\quad\text{and}\quad\ker B\cap \dom A\subseteq \ker A. 
\end{equation*}
Then there exists an operator $C$ between $\mathfrak{Y}$ and $\mathfrak{Z}$ such that $A\subseteq CB.$
\end{corollary}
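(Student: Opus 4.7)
The plan is to deduce this as an immediate consequence of Theorem \ref{t21}, which says that an operator solution $C$ to $A\subseteq CB$ exists precisely when $\dom A\subseteq\dom B$ and $\dim(\mul B)\ge \dim[A(\dom A\cap\ker B)]$. The first hypothesis of the corollary gives $\dom A\subseteq\dom B$ directly, so the only real content is to check the dimension inequality, and I would do this by showing that under our assumptions $A(\dom A\cap\ker B)$ is actually the zero subspace.

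To verify $A(\dom A\cap\ker B)=\{0\}$, I would take an arbitrary $z\in A(\dom A\cap\ker B)$, pick $x\in\dom A\cap\ker B$ with $(x,z)\in A$, and use the second hypothesis $\ker B\cap\dom A\subseteq\ker A$ to conclude $(x,0)\in A$ as well. Subtracting gives $(0,z)\in A$, i.e.\ $z\in\mul A$, and since $A$ is the graph of an operator we have $\mul A=\{0\}$, forcing $z=0$. Consequently $\dim[A(\dom A\cap\ker B)]=0\le \dim(\mul B)$ is trivial.

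With both conditions of Theorem \ref{t21}$(ii)$ verified, I would invoke the implication $(ii)\Rightarrow(i)$ of that theorem to obtain the required operator $C$ between $\mathfrak{Y}$ and $\mathfrak{Z}$ with $A\subseteq CB$. There is no substantive obstacle here: the work has already been done in Theorem \ref{t21}, and the present corollary merely isolates a convenient hypothesis on the pair $(\dom A,\ker B)$ that automatically implies the dimension inequality. The only point requiring any care is the observation that, for $A$ an operator, having $(x,z)\in A$ for some $x\in\ker A$ immediately forces $z=0$; beyond this the argument is formal.
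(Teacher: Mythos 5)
Your proposal is correct and follows exactly the paper's route: the paper likewise reduces Corollary \ref{c22} to Theorem \ref{t21} by observing that the hypotheses force $A(\dom A\cap\ker B)=\{0\}$ (via $z\in\mul A=\{0\}$ for $A$ an operator), which makes the dimension inequality trivial. No gaps; nothing further to add.
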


\begin{remark}\label{r23}
If $A$ and $B$ are both (graphs of) operators then conditions $\dom A\subseteq\dom B$ and $\ker A\cap\dom A\subseteq \ker A$ are also sufficient for the existence of an operator $C$ between $\mathfrak{Y}$ and $\mathfrak{Z}$ such that $A\subseteq CB.$
\qed
\end{remark}

\begin{corollary}\label{c24}
Let $R\subseteq\mathfrak{X}\times\mathfrak{Y}$ be a linear relation and 
\begin{equation*}
\Delta_{\ran R}:=\{ (y, y) : y\in \ran R\}\subseteq \mathfrak{Y}\times\mathfrak{Y}.
\end{equation*}
The following statements are equivalent:
\begin{itemize}
\item[$(i)$] There exists an operator $C$ between $\mathfrak{X}$ and $\mathfrak{Y}$ such that $\Delta_{\ran R}\subseteq CR^{-1};$
\item[$(ii)$] There exists an operator $C\subseteq R$ such that $\ran C=\ran R;$
\item[$(iii)$] $\dim (\ker R)\ge \dim (\mul R).$
\end{itemize}
\end{corollary}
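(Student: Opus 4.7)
The plan is to use statement (iii) as a pivot: I would obtain (i) $\Leftrightarrow$ (iii) as a direct specialization of Theorem \ref{t21}, and then establish (ii) $\Leftrightarrow$ (iii) by hand. For (i) $\Leftrightarrow$ (iii), I would apply Theorem \ref{t21} to the pair $A := \Delta_{\ran R}\subseteq \mathfrak{Y}\times \mathfrak{Y}$ and $B := R^{-1}\subseteq \mathfrak{Y}\times \mathfrak{X}$; then the operator solutions $C \subseteq \mathfrak{X}\times \mathfrak{Y}$ of $A \subseteq CB$ are exactly those of (i). Since $\dom \Delta_{\ran R} = \ran R = \dom R^{-1}$, the inclusion $\dom A \subseteq \dom B$ holds automatically. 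Using $\mul R^{-1} = \ker R$, $\ker R^{-1} = \mul R$, and the fact that $\Delta_{\ran R}$ acts as the identity on $\ran R \supseteq \mul R$, I compute $A(\dom A \cap \ker B) = \Delta_{\ran R}(\mul R) = \mul R$, so the dimension hypothesis of Theorem \ref{t21} reduces exactly to $\dim(\ker R) \ge \dim(\mul R)$, i.e.\ (iii).

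\textbf{Direction (ii) $\Rightarrow$ (iii).} Given an operator $C \subseteq R$ with $\ran C = \ran R$, I would argue that $C$ restricts to a linear surjection $\ker R \to \mul R$. For $x \in \ker R$, both $(x, 0)$ and $(x, C(x))$ lie in $R$, forcing $(0, C(x)) \in R$, hence $C(x) \in \mul R$. For surjectivity, each $m \in \mul R \subseteq \ran R = \ran C$ has a preimage $x_m$ with $(x_m, m) \in C \subseteq R$; combined with $(0, m) \in R$ this gives $(x_m, 0) \in R$, so $x_m \in \ker R$ and $C(x_m) = m$. The existence of such a surjection then yields $\dim(\ker R) \ge \dim(\mul R)$.

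\textbf{Direction (iii) $\Rightarrow$ (ii).} I plan to construct $C$ explicitly via Lemma \ref{l14}. Choose a direct summand $\mathfrak{X}_0$ of $\ker R$ in $\dom R$ with Hamel basis $\{x_\alpha\}_{\alpha \in I}$ and pick $y_\alpha \in R(x_\alpha)$; Lemma \ref{l14} guarantees that $\{y_\alpha\}_{\alpha \in I}$ is linearly independent and that $\ran R = \mul R \oplus \spc\{y_\alpha\}_{\alpha\in I}$. The assumption $\dim(\ker R) \ge \dim(\mul R)$ supplies a linear surjection $\phi \colon \ker R \to \mul R$. I would then define $C$ on $\dom R = \ker R \oplus \mathfrak{X}_0$ by
\[
C\Bigl(x' + \sum_{\alpha\in I_0}\lambda_\alpha x_\alpha\Bigr) := \phi(x') + \sum_{\alpha\in I_0}\lambda_\alpha y_\alpha,\qquad x' \in \ker R,
\]
for every finite set $\{\lambda_\alpha\}_{\alpha\in I_0\subseteq I}\subseteq\mathbb{K}$, which is a well-defined linear operator with $\ran C = \mul R + \spc\{y_\alpha\}_{\alpha\in I} = \ran R$. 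The inclusion $C \subseteq R$ then follows because $(x_\alpha, y_\alpha) \in R$ by construction and, for $x' \in \ker R$, $(x', \phi(x')) = (x', 0) + (0, \phi(x')) \in R$. The main obstacle I anticipate is purely bookkeeping in this last construction; the structural content is already packaged in Lemma \ref{l14} and Theorem \ref{t21}, so no genuinely new idea is required.
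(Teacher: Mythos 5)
Your proposal is correct and follows essentially the same route as the paper: the equivalence $(i)\Leftrightarrow(iii)$ comes from the identical specialization of Theorem \ref{t21} to $A=\Delta_{\ran R}$ and $B=R^{-1}$ (with the same computations $\dom A=\dom B=\ran R$, $\ker B=\mul R$, $\mul B=\ker R$, $A(\dom A\cap\ker B)=\mul R$), and the remaining elementary equivalence is the paper's ``standard approach,'' merely routed through $(iii)$ instead of $(i)$. One cosmetic point: in $(ii)\Rightarrow(iii)$ the surjection you construct is defined on $\ker R\cap\dom C$ rather than on all of $\ker R$, since $C\subseteq R$ does not force $\ker R\subseteq\dom C$; this does not affect the conclusion $\dim(\ker R)\ge\dim(\mul R)$.
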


\begin{proof}
The equivalence $(i)\Leftrightarrow(ii)$ follows easily by a standard approach, while $(i)\Leftrightarrow(iii)$ by Theorem \ref{t21} for $A=\Delta_{\ran R}$ and $B=R^{-1}.$
\end{proof}

\begin{remark}\label{r25}
We can specialize Corollary \ref{c24} for the relations $R^{-1},$ respectively $AB^{-1}$ to obtain new necessary and sufficient conditions in Corollary \ref{c18}, respectively Remark \ref{r19} and Corollary \ref{c20}.
\qed
\end{remark}

\begin{acknowledgements}
The work of the first author has been accomplished during his visit (supported by the Hungarian Scholarship Board) at the Department of Applied Analysis and Computational Mathematics, E\"otv\"os Lor\'and University of Budapest. He will take this opportunity to express his gratitude to all the members of this department, especially to Professor Zolt\'an Sebesty\'en, for their kind hospitality.
\end{acknowledgements}








\end{document}